\documentclass[reqno]{amsart}



\usepackage[utf8]{inputenc}
\usepackage{amsmath}
\numberwithin{equation}{section}
\numberwithin{figure}{section}
\numberwithin{table}{section}
\usepackage{graphicx}
\usepackage{amssymb}
\usepackage{mathabx}
\usepackage{mathrsfs}
\usepackage{amsfonts}
\usepackage[colorlinks=true, allcolors=blue]{hyperref}
\usepackage{subfigure} 
\usepackage{color}
\usepackage{extarrows}
\usepackage{booktabs}
\usepackage{hyperref}
\usepackage{tikz}
\usepackage{extarrows}
\usepackage{booktabs}
\usepackage{amsthm}
\usepackage{enumerate}
\usepackage{enumitem} 
\usepackage{indentfirst}
\usepackage{embrac}
\usepackage{cite} 
\usetikzlibrary{positioning,shapes}



\def\dd{\mathrm{d}}

\def\D{\mathcal{D}}

\def\bR{{\mathbb R}}

\def\FF{{\mathscr F}}

\def\sG{{\mathscr G}}

\def\i{{\mathrm{i}}}
\def\L{{\mathcal{L}}}

\def\sL{\mathscr{L}}
\def\cD{\mathcal{D}}

\def\bN{\mathbb{N}}

\def\bP{\mathbb{P}}
\def\fp{\mathfrak{p}}
\def\sD{\mathscr{D}}
\def\bQ{\mathbb{Q}}
\def\cB{\mathcal{B}}
\def\sQ{\mathscr{Q}}
\def\fc{\mathfrak{c}}

\def\limsup{\mathop{\overline{\mathrm{lim}}}}
\def\liminf{\mathop{\underline{\mathrm{lim}}}}

\def\${|\!|\!|}
\def\l|{\left|\!\left|\!\left|}
\def\r|{\right|\!\right|\!\right|}

\def\bE{\mathbb{E}}

\newtheorem{theorem}{Theorem}[section]
\newtheorem{lemma}[theorem]{Lemma}
\newtheorem{proposition}[theorem]{Proposition}
\newtheorem{corollary}[theorem]{Corollary}
\theoremstyle{definition}
\newtheorem{definition}[theorem]{Definition}

\theoremstyle{remark}
\newtheorem{remark}[theorem]{Remark}
\numberwithin{equation}{section}

\setcounter{tocdepth}{1}

\author{ Liping Li}
\address{Fudan University, Shanghai, China.}
\email{liliping@fudan.edu.cn}
\thanks{The author is a member of LMNS,  Fudan University.  He is partially supported by NSFC (No. 12371144).}

\author{Zhangjie Wang}
\address{Fudan University, Shanghai, China.}
\email{zjwang25@m.fudan.edu,cn}

\begin{document}

\title[From BRWs to FBMs]{From boundary random walks to Feller's Brownian Motions}



\keywords{Feller's Brownian motion, Boundary random walk, Invariance principle, Martingale problem, Skorokhod topology, Local time, Random walk, Weak convergence}

\begin{abstract}
We establish an invariance principle connecting boundary random walks on $\mathbb{N}$ with Feller’s Brownian motions on $[0,\infty)$. A Feller’s Brownian motion is a Feller process on $[0,\infty)$ whose excursions away from the boundary $0$ coincide with those of a killed Brownian motion, while its behavior at the boundary is characterized by a quadruple $(p_1,p_2,p_3,p_4)$. This class encompasses many classical models, including absorbed, reflected, elastic, and sticky Brownian motions, and further allows boundary jumps from $0$ governed by the measure $p_4$. For any Feller’s Brownian motion that is not purely driven by jumps at the boundary, we construct a sequence of boundary random walks whose appropriately rescaled processes converge weakly to the given Feller’s Brownian motion. 

\end{abstract}

\maketitle
\tableofcontents

\section{Introduction}\label{SEC1}

The approximation of stochastic processes on continuous state spaces by discrete models constitutes a classical theme in probability theory, motivated by both practical exigencies and theoretical considerations. From a practical standpoint, discrete models are indispensable for numerical simulations, computational implementations, and Monte Carlo–type methods, in which continuous dynamics must be represented by finite or countable structures. From a theoretical perspective, invariance principles provide a rigorous mathematical framework that explains how macroscopic continuous behavior emerges from microscopic discrete dynamics under appropriate scaling limits. Since the seminal contributions of Donsker \cite{D51, D52}, who established the functional central limit theorem for random walks, such invariance principles have played a fundamental role in justifying the approximation of Brownian motion by discrete counterparts. These foundational results underpin a vast body of work on diffusion limits and continue to serve as a guiding paradigm for the approximation of continuous stochastic dynamics by discrete processes. For comprehensive and systematic treatments, we refer the reader to Billingsley \cite{B99} and Ethier and Kurtz \cite{EK86}.

The approximation of stochastic processes becomes substantially more intricate when the state space is constrained by a boundary. Even in one dimension, boundary effects can fundamentally alter the qualitative behavior of the limiting process. Classical examples illustrating this complexity include absorbed, reflected, and sticky Brownian motions on the half-line $[0,\infty)$, whose construction and properties have been studied extensively since the early development of diffusion theory (see, e.g., Feller \cite{F54}; It\^o and McKean \cite{IM65}).
At the discrete level, absorbed Brownian motion is naturally approximated by random walks with absorbing at the boundary, while reflected Brownian motion can be obtained through suitable reflection mechanisms. Approximation schemes for reflected Brownian motion have been investigated in a variety of settings, including multidimensional domains; see, e.g., Burdzy et al. \cite{BC08}. Sticky Brownian motion, which spends a positive amount of time at the boundary, arises from more delicate boundary interactions, and its invariance principles have also been established for appropriate classes of random walks; see, for instance, Harrison et al. \cite{HL81} and Amir \cite{A91}. These results indicate that invariance principles in the presence of boundaries require more delicate analytical arguments. Indeed, the limiting dynamics depend not only on the local behavior of the process in the interior, but also crucially on the asymptotic manner in which the discrete process interacts with the boundary.


A particularly natural and flexible framework for describing one-dimensional diffusions with boundary effects is provided by Feller's Brownian motions on $[0,\infty)$. In his pioneering work, Feller \cite{F52} gave a complete characterization of the admissible boundary behaviors at $0$ for Brownian motion on the half-line in terms of boundary conditions imposed on the infinitesimal generator. More precisely, the generator acts as $\frac{1}{2}f''$, while its domain consists of functions satisfying the boundary condition
\begin{equation}\label{eq:11}
p_1 f(0)-p_2 f'(0)+\frac{p_3}{2} f''(0)
+\int_{(0,\infty)}\bigl(f(0)-f(x)\bigr)\,p_4(\mathrm{d}x)=0,
\end{equation}
where $p_1,p_2,p_3$ are nonnegative constants and $p_4$ is a measure on $(0,\infty)$ satisfying $\int_{(0,\infty)}(1\wedge x)\,p_4(\mathrm{d}x)<\infty$. This formulation unifies the classical boundary conditions of absorption, reflection, elasticity, and stickiness, and moreover allows for jump-in behavior from the boundary governed by the measure $p_4$. The resulting processes display a rich and sometimes intricate boundary behavior, which was analyzed in detail by It\^o and McKean \cite{IM63}. For a more intuitive discussion of the boundary dynamics associated with Feller's Brownian motions, we refer to Section~\ref{SEC22} below.

In recent years, invariance principles for discrete processes approximating Brownian motions with boundary behavior have attracted renewed attention, particularly in models where different boundary mechanisms are combined within a single framework. In this direction, Erhard et al. \cite{EFJP24} and Arroyove et al. \cite{ABP25} study invariance principles for random walks converging to Brownian motions with various combinations of continuous boundary behaviors, and also provide discussions and references to further recent work in this area. Their approach is well suited to absorbed, reflected, elastic and sticky boundary conditions; however, boundary dynamics involving jumps are explicitly excluded from their setting.

The main goal of this paper is to provide a discrete approximation scheme for general Feller’s Brownian motions on $[0,\infty)$ using \emph{boundary random walks}, that is, (simple) random walks on $\mathbb{N}$ which, upon reaching the boundary $0$, may jump to various positions in $\mathbb{N}$. We show that a broad class of Feller's Brownian motions, whose boundary dynamics are not purely jump-driven, arise as weak limits of appropriately rescaled boundary random walks. In particular, the limiting Feller's Brownian motions may exhibit boundary jumps governed by the measure $p_4$, including the case where $p_4$ is infinite, while our framework also recovers the classical cases without boundary jumps. The precise statements of these results are presented in Theorems~\ref{THM31} and \ref{THM35}. To our knowledge, discrete approximation schemes leading to such general boundary jump mechanisms in the limit have not been previously established. 

Our approach employs a conceptually transparent two-step strategy. First, tightness of the rescaled processes is established via the classical Aldous criterion (see \cite{Al78}), which is particularly well suited for sequences of c\`adl\`ag processes that may exhibit jumps. Second, the identification of the limiting process follows the strategy of Arroyove et al. \cite{ABP25}, exploiting the uniqueness of the martingale problem associated with the target Feller process. This approach avoids the need for explicit generator convergence and builds on recent ideas developed for diffusion limits with continuous boundary behavior, extending them to accommodate boundary jumps.

Our results do not cover the extreme case in which the boundary behavior is purely jump-driven, that is, when $p_2 = p_3 = 0$ and the jumping measure $p_4$ is infinite (see Section~\ref{SEC33} for further discussion). In this case, the Feller's Brownian motion lacks any continuous mechanism for entering the interior from the boundary, and the techniques employed here, particularly those used in the tightness analysis, are no longer applicable. Addressing purely jump-driven boundary behavior appears to require fundamentally different ideas, and we leave this problem for future investigation.

The paper is organized as follows. In Section~\ref{SEC2}, we recall the general definition and characterization of Feller's Brownian motions on the half-line and introduce the class of boundary random walks employed for their approximation. Section~\ref{SEC3} provides a heuristic discussion of the scaling scheme, along with an informal presentation of the main results and the key ideas underlying their proofs. The main invariance principles are stated rigorously in Section~\ref{SEC4}, and the remaining sections are devoted to the detailed proofs of these results.

\section{Boundary Random Walk and Feller's Brownian Motion}\label{SEC2}

The primary purpose of this section is to introduce two types of Markov processes: a discrete-time Markov chain and a continuous-time Feller process. The common feature of these processes is that they exhibit analogous behaviors at the respective boundaries of their state spaces.

\subsection{Boundary Random Walk}
Consider a discrete-time Markov chain
\[
X=(X_k)_{k\geq 0}
\]
defined on a probability space $(\Omega,\mathscr{G},\bP)$ with the state space $\bN=\{0,1,2,\cdots\}$. Its transition matrix is given by
\begin{equation}\label{eq:22}
P=(p_{ij})_{i,j\in \bN}=\begin{pmatrix}
\mathfrak{p}_0 & \mathfrak{p}_1 & \mathfrak{p}_2 & \mathfrak{p}_3 & \cdots \\
\frac{1}{2} & 0 & \frac{1}{2} & 0 & \cdots \\
0 & \frac{1}{2} & 0 & \frac{1}{2} & \cdots \\
0 & 0 & \frac{1}{2} & 0 & \cdots \\
\cdots & \cdots & \cdots & \cdots & \cdots
\end{pmatrix},
\end{equation}
where $\mathfrak{p}_j$, $j\in \bN$, are non-negative constants such that
\begin{equation}\label{eq:21}
    \sum_{j\in \bN}\mathfrak{p}_j\leq 1.
\end{equation}
More precisely, for any $i\geq 1$, one has $p_{ij}=\frac{1}{2}$ if $j=i-1$ or $j=i+1$, and $p_{ij}=0$ otherwise. Additionally, $p_{0j}=\mathfrak{p}_j$ for all $j\in \bN$. Let $\bN_\Delta:=\bN\cup \{\Delta\}$ denote the one-point compactification of $\bN$ with respect to the relative topology induced by the extended real number system, where $\Delta$ is interpreted as the cemetery state of $X$. Note that \eqref{eq:21} implies
\[
\mathfrak{p}_\Delta:=1-\sum_{j\in \bN}\mathfrak{p}_j\geq 0.
\]
With this convention, from any state $i\ge 1$ the Markov chain $X$ moves to its two neighboring states with equal probability. Once it reaches the boundary state $0$, however, it may jump to any $j$ with probability $\mathfrak{p}_j$, or be killed with probability $\mathfrak{p}_\Delta$. 

We refer to a discrete-time Markov chain whose transition matrix is of the form \eqref{eq:22} as a $\emph{boundary random walk}$ (BRW for short). Observe that the transition matrix of a BRW is completely determined by the vector $(\mathfrak{p}_\Delta, \mathfrak{p}_0,\cdots)$, which may equivalently be viewed as a probability measure on $\bN_\Delta$. For convenience, this measure is called the $\emph{jumping measure}$ (at $0$) of $X$.

\begin{figure}
    \centering
    \begin{center}
\begin{tikzpicture}[scale=0.7]

\coordinate (ballDelta) at (0, 0);
\fill[blue!70] (ballDelta) circle (2pt);
\draw[black, thick] (ballDelta) circle (2pt);
\node[below, yshift=-4mm] at (ballDelta) {$\Delta$};

\begin{scope}[xshift=2.25cm]
    \foreach \i in {0,1,2,3,4,5,6} {
        \coordinate (ball\i) at (\i*1.5, 0);
    }
    \draw[thick, gray!60] (ball0) -- (ball1) -- (ball2) -- (ball3) -- (ball4) -- (ball5) -- (ball6);
    \foreach \i in {0,1,2,3,4,5,6} {
        \fill[red!70] (ball\i) circle (2pt);
        \draw[black, thick] (ball\i) circle (2pt);
        \node[below, yshift=-4mm] at (ball\i) {$\i$};
    }
    \node[scale=1] at (9.9, 0) {$\cdots$};
\end{scope}

\draw[thick, ->, >=stealth, red!80]
    (2.25,0.4) .. controls (1.6,2.0) and (0.65,2.0) .. (0,0.4);
\node[above, font=\small, fill=white, rounded corners=2pt, inner sep=2pt]
    at (1.125, 2.0) {$\fp_\Delta$};

\draw[thick, ->, >=stealth, blue!80]
    (2.25,0.4) .. controls (2.7,2.0) and (3.0,2.0) .. (3.75,0.4);
\node[above, font=\small, fill=white, rounded corners=2pt, inner sep=2pt]
    at (3.0, 2.0) {$\fp_1$};

\draw[thick, ->, >=stealth, green!80]
    (2.25,0.4) .. controls (3.0,1.7) and (4.0,1.7) .. (5.25,0.4);
\node[above, font=\small, fill=white, rounded corners=2pt, inner sep=2pt]
    at (3.75, 1.7) {$\fp_2$};

\draw[thick, ->, >=stealth, orange!80]
    (2.25,0.4) .. controls (3.5,1.4) and (5.0,1.4) .. (6.75,0.4);
\node[above, font=\small, fill=white, rounded corners=2pt, inner sep=2pt]
    at (4.5, 1.4) {$\fp_3$};


\draw[thick, ->, >=stealth, purple!80]
    (2.25,0.4) .. controls (3.8,0.9) and (6.0,0.9) .. (8.25,0.4);
\node[above, font=\small, fill=white, rounded corners=2pt, inner sep=2pt]
    at (6.0, 0.9) {$\fp_4$};

\node[scale=1] at (9, 0.8) {$\cdots$};

\draw[thick, ->, >=stealth, brown!80]
    (9.75,-0.4) .. controls (10.25,-1.2) and (10.75,-1.2) .. (11.25,-0.4);
\node[below, font=\small, fill=white, rounded corners=2pt, inner sep=2pt]
    at (10.5, -1.2) {$\dfrac{1}{2}$};

\draw[thick, ->, >=stealth, teal!80]
    (9.75,-0.4) .. controls (9.25,-1.2) and (8.75,-1.2) .. (8.25,-0.4);
\node[below, font=\small, fill=white, rounded corners=2pt, inner sep=2pt]
    at (9.0, -1.2) {$\dfrac{1}{2}$};

\draw[thick, ->, >=stealth, magenta!80] 
    (2.25,0.4) arc [start angle=45, end angle=315, radius=0.6];
\node[below right, font=\small, fill=white, rounded corners=2pt, inner sep=2pt]
    at (0.55, 0.25) {$\fp_0$};
    
\end{tikzpicture}
\end{center}
    \caption{Jumping behavior of BRW.}
    \label{fig:my_label}
\end{figure}
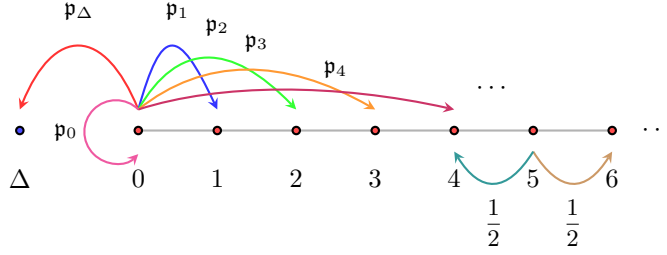

Let $\mathscr G^X_m:=\sigma(\{X_k: 0\leq k \leq m\})$ denote the sub-$\sigma$-algebra of $\mathscr G$ generated by $\{X_k: 0\leq k\leq m\}$, and let $(\mathscr G^X_m)_{m\geq 0}$ be the natural filtration of $X$. Note that any function $f$ defined on $\mathbb{N}$ is implicitly extended to $\mathbb{N}_\Delta$ by setting $f(\Delta):=0$. We now state a simple lemma for later use.

\begin{lemma}\label{LM21}
Assume that $\bP(X_0=i_0)=1$ for some $i_0\in \bN$, and that there exists an integer $j_0$ such that $\mathfrak{p}_j=0$ for all $j>j_0$.  Then, for any real-valued function $f$ on $\bN$, the process defined by
\[
M^f_0:=0,\quad M^f_m:=f(X_m)-f(X_0)-\sum_{k=0}^{m-1} (P-I)f(X_k),\quad m\geq 1,
\]
is a square-integrable $(\mathscr G^X_m)$-martingale, where $(P-I)f(i):=\sum_{j\in \bN}p_{ij}f(j)-f(i)$ for all $i\in \bN$. 
\end{lemma}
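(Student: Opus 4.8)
The plan is to verify the martingale property directly from the Markov property, after first confirming that everything is well-defined and square-integrable. The finiteness assumption on the jumping measure (i.e. $\mathfrak{p}_j = 0$ for $j > j_0$) guarantees that the chain $X$, started from a fixed integer $i_0$, can only reach states in $\{0, 1, \dots, \max(i_0, j_0) + m\}$ by time $m$; hence each $X_m$ is a bounded random variable (taking finitely many values), so $f(X_m)$ is bounded for every fixed $m$, and likewise $(P-I)f(X_k)$ is bounded for each $k \le m-1$ since $(P-I)f(i) = \tfrac{1}{2}f(i-1) + \tfrac{1}{2}f(i+1) - f(i)$ for $i \ge 1$ and $(P-I)f(0) = \sum_{j} \mathfrak{p}_j f(j) - f(0)$ is a finite sum. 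Consequently $M^f_m$ is a bounded random variable for each fixed $m$, in particular square-integrable and integrable, and it is $\mathscr G^X_m$-measurable by construction.

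For the martingale identity, I would compute $\mathbf{E}[M^f_{m+1} - M^f_m \mid \mathscr G^X_m]$. By the definition of $M^f$ this difference equals $f(X_{m+1}) - f(X_m) - (P-I)f(X_m)$, so it suffices to show
\[
\mathbf{E}\bigl[f(X_{m+1}) \,\big|\, \mathscr G^X_m\bigr] = f(X_m) + (P-I)f(X_m) = (Pf)(X_m).
\]
This is precisely the Markov property together with the definition of the transition matrix $P$: conditionally on $\mathscr G^X_m$, the law of $X_{m+1}$ is the row $p_{X_m, \cdot}$ of $P$, so $\mathbf{E}[f(X_{m+1}) \mid \mathscr G^X_m] = \sum_{j \in \bN} p_{X_m, j} f(j)$, which by definition is $(Pf)(X_m)$. (Here the convention $f(\Delta) = 0$ is consistent, since killing contributes the term $\mathfrak{p}_\Delta f(\Delta) = 0$ when $X_m = 0$.) Telescoping over $m$, or simply noting that $M^f_0 = 0$, then gives that $(M^f_m)_{m \ge 0}$ is an $(\mathscr G^X_m)$-martingale.

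There is no serious obstacle here; the only point requiring a little care is the justification of integrability, which is exactly why the hypothesis $\mathfrak{p}_j = 0$ for $j > j_0$ is imposed — without it, a single step from $0$ could already produce a non-integrable value of $f(X_1)$ for a rapidly growing $f$, and the conditional expectations above would not even make sense. With that hypothesis in force, the "lightcone" bound $X_m \le \max(i_0, j_0) + m$ reduces all the relevant expectations to finite sums, and the argument is routine. I would state the square-integrability as an immediate consequence of $M^f_m$ being bounded for each fixed $m$.
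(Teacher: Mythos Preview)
Your proposal is correct and follows essentially the same approach as the paper's proof: establish that $X_m$ is bounded for each fixed $m$ via a ``lightcone'' estimate (the paper uses $|X_m|\le i_0 + m j_0$, you use $X_m \le \max(i_0,j_0)+m$, both valid), deduce that $M^f_m$ is bounded and hence square-integrable, and then invoke the Markov property for the martingale identity. The paper's proof is terser---it simply says ``the conclusion then follows directly from the Markov property of $X$''---whereas you have spelled out the conditional-expectation computation explicitly, but the substance is identical.
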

\begin{proof}
Note that for any $\omega\in \Omega$, we have $|X_m(\omega)|\leq i_0+mj_0$. Hence, $X_m$ is a bounded random variable for each $m$. It follows that $M^f_m$ is also bounded and therefore square-integrable. The conclusion then follows directly from the Markov property of $X$.
\end{proof}

Another auxiliary fact that will be needed later is that the series
\begin{equation}\label{eq:23-2}
F(x):=\sum_{k=0}^\infty \mathbb{P}(X_k=0)\, x^k,\quad 0\leq x<1,
\end{equation}
admits an explicit representation in terms of the jumping measure $(\mathfrak{p}_j)_{j\in\mathbb{N}_\Delta}$.

\begin{lemma}\label{LM22}
Assume that $\bP(X_0=0)=1$. Then the series $F(x)$ defined as \eqref{eq:23-2} admits the following explicit expression
\begin{equation}\label{eq:24-2}
F(x)=\frac{1}{1-x\cdot \sum_{k=0}^\infty \fp_k \left(\frac{1-\sqrt{1-x^2}}{x} \right)^k},\quad 0\leq x<1. 
\end{equation}
In particular, if $\fp_0=0$, then for any positive integer $m$, 
\begin{equation}\label{eq:25-2}
\sum_{k=0}^{m-1}\bP(X_k=0)\leq \frac{e}{\sqrt{1-e^{-\frac{2}{m}}}}. 
\end{equation}
\end{lemma}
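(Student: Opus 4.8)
The plan is to analyze the generating function $F(x) = \sum_{k\ge 0} \mathbb{P}(X_k=0)\,x^k$ by a first-return (renewal) decomposition at the boundary state $0$. Let $G(x) := \sum_{k\ge 1} \mathbb{P}(T_0 = k)\,x^k$ be the generating function of the first-return time $T_0 := \inf\{k\ge 1 : X_k = 0\}$ to $0$, starting from $0$. The standard renewal identity gives $F(x) = 1/(1 - G(x))$, so everything reduces to computing $G(x)$. First I would split $T_0$ according to the first step of $X$ out of $0$: with probability $\mathfrak{p}_0$ the walk returns immediately (contributing $\mathfrak{p}_0 x$), and with probability $\mathfrak{p}_k$ for $k\ge 1$ the walk moves to state $k$ and must then travel from $k$ down to $0$. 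Because from states $\ge 1$ the walk is a simple symmetric random walk, the first-passage time from $k$ to $0$ has generating function $\varphi(x)^k$, where $\varphi(x) := \mathbb{E}_1[x^{\sigma_0}]$ and $\sigma_0$ is the hitting time of $0$ started from $1$; here I would use the classical one-step argument $\varphi(x) = \frac{x}{2} + \frac{x}{2}\varphi(x)^2$, whose root in $[0,1]$ for $0\le x<1$ is $\varphi(x) = \frac{1-\sqrt{1-x^2}}{x}$. Summing over the first step yields $G(x) = x\sum_{k\ge 0} \mathfrak{p}_k\,\varphi(x)^k$, and substituting into $F(x) = 1/(1-G(x))$ gives exactly \eqref{eq:24-2}.

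For the second assertion, I would set $\mathfrak{p}_0 = 0$ and estimate $F(x)$ from above. When $\mathfrak{p}_0 = 0$ the sum in the denominator starts at $k=1$; since $\sum_{k\ge 1}\mathfrak{p}_k \le 1$ and $0 \le \varphi(x) = \frac{1-\sqrt{1-x^2}}{x} \le 1$ for $0\le x<1$, and in fact $\varphi(x)\le \varphi(x)^1$ with $\varphi$ bounded, I would bound $x\sum_{k\ge1}\mathfrak{p}_k\varphi(x)^k \le x\,\varphi(x) = 1-\sqrt{1-x^2}$, hence $F(x) \le 1/\sqrt{1-x^2}$. Now, since all coefficients $\mathbb{P}(X_k=0)$ are nonnegative, for any $x\in(0,1)$ we have the crude but effective bound
\[
\sum_{k=0}^{m-1}\mathbb{P}(X_k=0) \le x^{-(m-1)}\sum_{k=0}^{m-1}\mathbb{P}(X_k=0)\,x^k \le x^{-(m-1)}F(x) \le \frac{x^{-(m-1)}}{\sqrt{1-x^2}}.
\]
Optimizing (or simply choosing) $x = e^{-1/m}$ gives $x^{-(m-1)} = e^{(m-1)/m} \le e$ and $\sqrt{1-x^2} = \sqrt{1-e^{-2/m}}$, which yields \eqref{eq:25-2}.

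The main obstacle I anticipate is not any single computation but ensuring the renewal decomposition is rigorously justified: one must check that $F$, $G$, and $\varphi$ are all well-defined (finite) power series on $[0,1)$, that the first-passage generating function from $k$ to $0$ is genuinely $\varphi(x)^k$ — which uses the strong Markov property at successive hitting times of $0,1,\dots,k-1$ together with spatial homogeneity of the walk on $\{1,2,\dots\}$ — and that the interchange of summations in forming $G(x) = x\sum_k \mathfrak{p}_k\varphi(x)^k$ is legitimate (this follows from nonnegativity of all terms, i.e. Tonelli). One also needs $\varphi(x) = \frac{1-\sqrt{1-x^2}}{x}$ to be the correct (probabilistic, i.e. $[0,1]$-valued, vanishing as $x\to 0$) branch of the quadratic, rather than the other root $\frac{1+\sqrt{1-x^2}}{x}$; this is settled by noting $\varphi(0)=0$ and $0\le\varphi(x)\le1$. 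The final bound \eqref{eq:25-2} is then a short deterministic estimate with essentially no subtlety beyond the choice $x=e^{-1/m}$.
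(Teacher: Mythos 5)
Your argument is correct, and for the identity \eqref{eq:24-2} it takes a genuinely different (and more streamlined) route than the paper. The paper's proof, given in Appendix~\ref{APPA}, starts from the one-step recurrence for $a_k=\bP(X_k=0)$, expresses the first-passage probabilities $\bP_i(\tau=j)$ through the explicit shifted Catalan numbers $M_{i,j_0}$, and then obtains \eqref{eq:24-2} by two interchanges of summation in the power series for $F$; the closed form $\bigl(\tfrac{1-\sqrt{1-4t}}{2t}\bigr)^i$ for the Catalan generating function is imported as a known combinatorial fact. You instead invoke the renewal identity $F=1/(1-G)$ for the (possibly defective, because of $\fp_\Delta$) first-return distribution at $0$, decompose $G$ by the first step out of $0$, and characterize the one-step first-passage generating function $\varphi$ by the quadratic $\varphi=\tfrac{x}{2}+\tfrac{x}{2}\varphi^2$, selecting the root $\varphi(x)=\tfrac{1-\sqrt{1-x^2}}{x}$ by the boundary behaviour at $x=0$. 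This bypasses the Catalan combinatorics and the series rearrangements entirely; the price is that you must justify the renewal identity and the factorization $\bE_k[x^{\sigma_0}]=\varphi(x)^k$ via the strong Markov property, which you correctly flag and which is standard (all terms are nonnegative, so the interchanges are covered by Tonelli, and $G(x)\le x<1$ on $[0,1)$ makes the geometric series legitimate). Your derivation of \eqref{eq:25-2} — bounding the denominator below by $\sqrt{1-x^2}$ when $\fp_0=0$ and choosing $x=e^{-1/m}$ — coincides with the paper's.
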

\begin{proof}
Although the proof of \eqref{eq:24-2} relies only on elementary techniques, its detailed verification is nevertheless rather cumbersome. We therefore defer it to Appendix~\ref{APPA}. Let us prove the estimate \eqref{eq:25-2}. Taking $x:=e^{-1/m}\in (0,1)$, we have
 \[
     \sum_{k=0}^{m-1}\bP(X_k=0)\leq e\sum_{k=0}^{m-1}\bP(X_k=0)\, x^k\leq eF(x). 
 \]
Since $\fp_0=0$ and $\frac{1-\sqrt{1-x^2}}{x}\in (0,1)$, it follows that 
 \[
 \begin{aligned}
    F(x)&=\frac{1}{\sqrt{1-x^2}+(1-\sqrt{1-x^2})-(1-\sqrt{1-x^2})\sum_{k=1}^\infty\fp_k\left(\frac{1-\sqrt{1-x^2}}{x} \right)^{k-1}} \\
    &\leq \frac{1}{\sqrt{1-x^2}+(1-\sqrt{1-x^2})-(1-\sqrt{1-x^2})\sum_{k=1}^\infty\fp_k}\leq \frac{1}{\sqrt{1-x^2}}. 
 \end{aligned}\]
 Substituting $x = e^{-1/m}$ then yields the estimate \eqref{eq:25-2}.
\end{proof}
\begin{remark}
If $\fp_\Delta>0$, then \eqref{eq:24-2} remains valid at $x=1$. In particular, $\sum_{k=0}^\infty \bP(X_k=0)=1/\fp_{\Delta}$ when $X$ starts from the origin $0$. 
\end{remark}

\subsection{Feller's Brownian motion}\label{SEC22}

Let $E:=[0,\infty)$, and let $E_\Delta:=E\cup \{\Delta\}$ denote its one-point compactification, where $\Delta$ is identified with $\infty$ in the extended real number system endowed with the usual Euclidean topology. Moreover, let $C_0(E)$, equipped with the uniform norm $\|\cdot\|_u$, be the Banach space of continuous functions $f$ on $E$ satisfying $\lim_{x\rightarrow \infty}f(x)=0$. A Markov process on $E$ with $\Delta$ as the cemetery is called a $\emph{Feller process}$ if its transition semigroup $(P_t)_{t\geq 0}$ satisfies the following conditions: $P_0$ is the identity map on $C_0(E)$, $P_tC_0(E)\subset C_0(E)$ for all $t\geq 0$, and $P_tf\rightarrow f$ in $C_0(E)$ as $t\rightarrow 0$ for every $f\in C_0(E)$.

The definition of Feller's Brownian motion is provided below. (Although the sample space in this definition is denoted by the same symbol $\Omega$ as for the BRW, this mild abuse of notation will not cause any confusion in the subsequent context.)

\begin{definition}\label{DEF22}
Let $B=\left\{\Omega, \FF, \FF_t, B_t, \theta_t, (\bP^x)_{x\in E}\right\}$ be a Markov process on $E$ with lifetime $\zeta$. It is called a $\emph{Feller's Brownian motion}$ (FBM for short) if $B$ is a Feller process on $E$, and its killed process upon hitting $0$ has the same law as the killed Brownian motion on $(0,\infty)$.
\end{definition}

Let $B=(B_t)_{t\geq 0}$ be an FBM on $E$. Its transition semigroup $(P_t)_{t\geq 0}$ is fully determined by its $\emph{infinitesimal generator}$. Recall that a function $f\in C_0(E)$ belongs to the domain $\cD(\sL)$ of the infinitesimal generator if the limit
\[
\sL f:=\lim_{t\downarrow 0}\frac{P_t f-f}{t}
\]
exists in $C_0(E)$. The operator $\sL$ with domain $\cD(\sL)$ is called the infinitesimal generator of $(P_t)_{t\geq 0}$. The following characterization of $\sL$ for FBMs is due to Feller \cite{F52}; a detailed proof can also be found in \cite[Appendix~A]{Li25}.

\begin{theorem}\label{THM23}
A Markov process $B=(B_t)_{t\geq 0}$ on $E$ is an FBM if and only if it is a Feller process on $E$ whose infinitesimal generator on $C_0(E)$ is $\sL f=\frac{1}{2}f''$ with domain
\begin{equation}\label{eq:37}
\begin{aligned}
\cD(\sL)=&\bigg\{f\in C_0(E)\cap C^2({\color{blue}(}0,\infty)): f''\in C_0(E), \\
	&\quad p_1f(0)-p_2f'(0)+\frac{p_3}{2}f''(0)+\int_{(0,\infty)}\left(f(0)-f(x)\right)\,p_4(\dd x)=0\bigg\},
\end{aligned}\end{equation}
where $p_1,p_2,p_3$ are non-negative numbers, and $p_4$ is a non-negative measure on $(0,\infty)$ such that
\begin{equation}\label{eq:330}
	\int_{(0,\infty)}(x\wedge 1)\,p_4(\dd x)<\infty
\end{equation}
and
\begin{equation}\label{eq:38}
	|p_4|:=p_4\left((0,\infty) \right)=\infty \quad \text{if}\quad p_2=p_3=0.
\end{equation}
The parameters $p_1,p_2,p_3,p_4$ are uniquely determined up to a multiplicative constant for each FBM.
\end{theorem}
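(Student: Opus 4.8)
The plan is to prove the two implications of Theorem~\ref{THM23} separately, relying on Feller's classical analytic results. For the \emph{only if} direction, I would start from an FBM $B$ and show that its infinitesimal generator must have the stated form. Since the killed process upon hitting $0$ agrees in law with killed Brownian motion on $(0,\infty)$, a standard argument shows that for any $f\in\cD(\sL)$ supported away from $0$, $\sL f=\tfrac12 f''$; more precisely, $\sL$ restricted to $C^\infty_c((0,\infty))$ acts as $\tfrac12 f''$, and by the resolvent identity $\cD(\sL)\subset C^2((0,\infty))$ with $\sL f=\tfrac12 f''$ on all of $(0,\infty)$. The remaining task is to identify the admissible boundary conditions at $0$. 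Here I would invoke Feller's theory of boundary conditions for the one-dimensional diffusion operator $\tfrac12 \tfrac{d^2}{dx^2}$: the boundary point $0$ is a regular boundary for $\tfrac12 f''$, so all conservative (or sub-Markovian) contraction semigroups extending the minimal process are parameterized by boundary conditions of the form \eqref{eq:37}, with the non-negativity of $p_1,p_2,p_3$ and the measure $p_4$ (together with the integrability \eqref{eq:330}) being precisely the conditions guaranteeing that the resulting resolvent is positivity-preserving and sub-Markovian. The condition \eqref{eq:38} is the requirement that the boundary not be an instantaneously sticky-reflecting point without any mechanism for the process to leave $0$ continuously and still be a genuine Feller (hence strong Markov, right-continuous) process: if $p_2=p_3=0$ and $|p_4|<\infty$, the only consistent interpretation forces $0$ to be absorbing up to a jump at an exponential time, which is covered by allowing $|p_4|=\infty$ or falls outside the FBM class. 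I would make this precise by checking that the operator $(\tfrac12 f'',\cD(\sL))$ with $|p_4|<\infty$ and $p_2=p_3=0$ fails to generate a Feller semigroup (the range condition $(\lambda-\sL)\cD(\sL)=C_0(E)$ breaks down, or the resolvent is not strongly continuous).

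For the \emph{if} direction, I would take the operator $(\sL,\cD(\sL))$ as defined in \eqref{eq:37} and verify the Hille--Yosida--Ray conditions: (i) $\cD(\sL)$ is dense in $C_0(E)$; (ii) $\sL$ satisfies the positive maximum principle; (iii) for some (hence all) $\lambda>0$, the range $(\lambda-\sL)\cD(\sL)$ is all of $C_0(E)$. Then Hille--Yosida--Ray produces a Feller semigroup $(P_t)$ with generator $\sL$, and its associated Feller process is automatically an FBM because its generator restricted to functions vanishing near $0$ is $\tfrac12 f''$, which pins down the killed-on-$\{0\}$ process as killed Brownian motion. Density of $\cD(\sL)$ is routine: smooth compactly supported functions on $(0,\infty)$ already lie in $\cD(\sL)$ and are dense. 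The positive maximum principle follows because $\sL f=\tfrac12 f''$: if $f$ attains a non-negative maximum at an interior point $x_0>0$ then $f''(x_0)\le 0$; if the maximum is at $x_0=0$, one uses the boundary condition \eqref{eq:37} — rearranged as $\tfrac{p_3}{2}f''(0)=-p_1 f(0)+p_2 f'(0)-\int (f(0)-f(x))\,p_4(\dd x)$ — to conclude $\tfrac12 f''(0)\le 0$, since $f(0)\ge 0$, $f'(0)\le 0$ at a boundary maximum, and $f(0)-f(x)\ge 0$ (using \eqref{eq:38} to handle the degenerate case $p_3=0$).

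The surjectivity of $\lambda-\sL$ is where the real work lies, and I expect it to be the main obstacle. The strategy is to solve, for given $g\in C_0(E)$, the ODE $\tfrac12 u''-\lambda u=-g$ on $(0,\infty)$ explicitly: the general bounded solution is $u(x)=u_{\mathrm{part}}(x)+c\,e^{-\sqrt{2\lambda}\,x}$ where $u_{\mathrm{part}}$ is the particular solution given by the whole-line resolvent kernel of Brownian motion (which already lies in $C_0(E)\cap C^2((0,\infty))$ with $u_{\mathrm{part}}''\in C_0(E)$), and the free constant $c$ must be chosen so that the boundary condition in \eqref{eq:37} holds. Plugging $u$ into the boundary functional gives an affine equation in $c$ of the form $A c = -\Phi(u_{\mathrm{part}})$, and one checks that the coefficient $A$ of $c$ — namely $p_1 - p_2(-\sqrt{2\lambda}) + \tfrac{p_3}{2}\cdot 2\lambda + \int_{(0,\infty)}(1-e^{-\sqrt{2\lambda}\,x})\,p_4(\dd x)$ — is strictly positive under the standing hypotheses (all four terms are $\ge 0$, and they cannot all vanish: if $p_1=p_2=p_3=0$ then $|p_4|=\infty$ by \eqref{eq:38}, forcing $\int(1-e^{-\sqrt{2\lambda}x})\,p_4(\dd x)>0$). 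Hence $c$ is uniquely determined, $u\in\cD(\sL)$, and $(\lambda-\sL)u=g$. One must also verify the integrability \eqref{eq:330} is exactly what makes the integral term $\int(f(0)-f(x))\,p_4(\dd x)$ and $\int(1-e^{-\sqrt{2\lambda}x})\,p_4(\dd x)$ finite for $f\in\cD(\sL)$ (using $|f(0)-f(x)|\le \|f'\|_u\, x$ near $0$ and boundedness far away). Finally, the uniqueness of $(p_1,p_2,p_3,p_4)$ up to a common positive scalar follows because these parameters are read off from the single linear boundary functional annihilating $\cD(\sL)$, which is determined by $\cD(\sL)$ up to multiplication by a constant: evaluating on suitable test functions recovers $p_1,p_2,p_3$ and the measure $p_4$ (e.g.\ by testing against functions that isolate each term).
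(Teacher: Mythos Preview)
The paper does not give its own proof of this theorem: it attributes the result to Feller \cite{F52} and refers to \cite[Appendix~A]{Li25} for a detailed argument. Your Hille--Yosida route is exactly the classical approach found in those references, so in spirit you are aligned with what the paper invokes.

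There is, however, one concrete gap in your sketch. You claim density of $\cD(\sL)$ is ``routine'' because smooth compactly supported functions on $(0,\infty)$ already lie in $\cD(\sL)$. This is false when $p_4\neq 0$: for $f\in C_c^\infty((0,\infty))$ one has $f(0)=f'(0)=f''(0)=0$, but the boundary functional reduces to $-\int_{(0,\infty)} f(x)\,p_4(\dd x)$, which need not vanish. So $C_c^\infty((0,\infty))\not\subset\cD(\sL)$ in general. Density must instead be obtained either by first building the resolvent $G_\lambda$ explicitly (as you do for surjectivity) and noting $\lambda G_\lambda\to I$ strongly, or by correcting each $f\in C_c^\infty((0,\infty))$ with a multiple of a fixed function (e.g.\ $e^{-\sqrt{2\lambda}\,x}$) chosen to annihilate the boundary functional; both fixes are straightforward once noticed.

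A smaller point: your positive-maximum-principle argument at $x_0=0$ is incomplete when $p_3=0$, since the boundary condition then does not directly constrain $f''(0)$. You correctly flag \eqref{eq:38} as the relevant ingredient, but the actual mechanism is: when the maximum is at $0$, all nonnegative terms in the boundary identity must vanish, forcing either $f'(0)=0$ (if $p_2>0$, whence $f''(0)\le 0$ by second-order Taylor) or $f(x)=f(0)$ for $p_4$-a.e.\ $x$ (if $p_2=p_3=0$, whence interior maxima $x_n\to 0$ give $f''(x_n)\le 0$ and continuity finishes). This deserves to be spelled out.
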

\begin{remark}\label{RM24}
It is worth noting that $p_4([\varepsilon,\infty))<\infty$ for any $\varepsilon>0$ in view of \eqref{eq:330}. In addition,  for any $f\in \cD(\sL)$, the derivative at the boundary $f'(0):=\lim_{x\downarrow 0}f'(x)$ exists, and $\lim_{x\uparrow \infty}f'(x)=0$. (The existence of the first limit can be found in, e.g., \cite[(3.12)]{Li25}, while the second limit follows from \cite[(3.6)]{Li25} together with the equality above \cite[(3.12)]{Li25}.) In particular, $\|f'\|_u=\sup_{x\in [0,\infty)}|f'(x)|<\infty$.  
\end{remark}

The four parameters $p_1, p_2, p_3, p_4$ introduced in Theorem~\ref{THM23} correspond, respectively, the four possible boundary behaviors of an FBM at $0$: \emph{killing}, \emph{reflecting}, \emph{sojourn}, and \emph{jumping}. Using these parameters, we provide an intuitive—though not fully rigorous—description of the boundary behavior of an FBM upon hitting 0, which serves to motivate the scaling strategy employed in  Theorems~\ref{THM31} and \ref{THM35}.
For a precise and comprehensive account, the reader is referred to \cite{IM63} or \cite[\S3]{Li25}.

Broadly speaking, the boundary behavior of an FBM at $0$ can be decomposed into two stages. The first stage consists of two types of continuous behavior: sojourn, in which the process remains at $0$ for a positive duration of time (if $p_3>0$), and reflection, in which the process repeatedly moves slightly to the right of $0$, returns to $0$, and then bounces out again within an extremely short time interval (if $p_2>0$). Each departure from $0$ to the right, followed by a return to $0$, is called an \emph{excursion}. In this stage, the boundary behavior is governed primarily by a sequence of such excursions, while the remaining sojourn times at $0$ are broken into numerous tiny disjoint pieces interspersed among the excursions, forming a structure reminiscent of a Cantor-type set. The first stage ends abruptly either when the process is about to return to $0$ at the completion of an excursion (when $p_2>0$) or after it has spent a positive amount of time at $0$ (when $p_2=0$ and $p_3>0$). At that point, the boundary behavior transitions into the second stage, namely the jump stage: the FBM jumps to a neighborhood of $x>0$ with `probability'
$p_4(\mathrm{d}x)/(p_1 + |p_4|)$,
or is killed (that is, jumps to the cemetery $\Delta$) with `probability' $p_1/(p_1 + |p_4|)$. For convenience, we call
\begin{equation}\label{eq:26}
\mu:=\frac{p_1\cdot \delta_\Delta+p_4}{p_1+|p_4|}
\end{equation}
the \emph{jumping measure} (on $E_\Delta$) of the FBM at $0$. 
It should be noted that when $|p_4| = \infty$, the jumping measure defined above ceases to be meaningful. In this case, the jump behavior in the second stage becomes highly intricate; a rigorous characterization is provided in \cite[\S12–16]{IM63}. 

The key point is to examine the termination time of the first stage, that is, the time at which the jump occurs. This time consists of two components: the total sojourn time and the total reflection time. The sojourn time is relatively straightforward to describe; it follows an exponential distribution with parameter $(p_1 + |p_4|)/p_3$ and can be expressed as
\[
\xi_1 := \frac{p_3}{p_1 + |p_4|} \, \mathfrak{e},
\]
where $\mathfrak{e}$ is an exponential random variable with parameter $1$. The reflection time $\xi_2$, in contrast, involves the local time $(L_t)_{t\ge 0}$ of the reflected Brownian motion on $E$ at $0$ (see \cite[Chapter V, \S2]{RY99} for the definition), and can be written as
\[
\xi_2 := \inf \left\{ t>0 : L_t > \frac{p_2}{p_1 + |p_4|} \, \mathfrak{e} \right\}.
\]
A more rigorous formulation for these two times is given in \cite[17b]{IM63}.
We now provide a rough, average estimate for them. Clearly, the expected sojourn time is simply the expectation of $\xi_1$, which is equal to $p_3 / (p_1 + |p_4|)$. According to the L\'evy identity (see \cite[Chapter V, Theorem 2.3]{RY99}) and the reflection principle (see \cite[Chapter III, Proposition 3.7]{RY99}), $L_t$ grows roughly like $t^{1/2}$. Therefore, we may approximate
\[
    \xi_2 \approx \left( \frac{p_2}{p_1 + |p_4|} \right)^2.
\]

In summary, each time the FBM reaches $0$, it remains in a sojourn at $0$, undergoes reflection near $0$, or exhibits both behaviors simultaneously for approximately
 \begin{equation}\label{eq:27}
   \xi= \xi_1 + \xi_2 \approx \frac{p_3}{p_1 + |p_4|} + \left( \frac{p_2}{p_1 + |p_4|} \right)^2
 \end{equation}
 units of time, after which it jumps away according to the `probability' given by \eqref{eq:26}. If the process is not killed, it then evolves as a standard Brownian motion until it returns to $0$, at which point the same boundary behavior is enacted, and this cycle repeats indefinitely.


\section{From BRWs to FBM: Heuristic investigations}\label{SEC3}

Let $n$ be a positive integer, and let $X=(X_k)_{k\geq 0}$ be a BRM with transition matrix given by \eqref{eq:22}. Motivated by the classical Donsker's invariance principle, we consider the rescaled process
\[
    B^{(n)}_t:=\frac{1}{n}X_{\lfloor n^2t\rfloor},\quad t\geq 0, 
\]
where $\lfloor \cdot \rfloor$ denotes the floor function. 
The primary goal of this paper is to construct a suitable BRW such that, for sufficiently large $n$, the rescaled process $B^{(n)}$ approximates a given FBM.

\subsection{Power-law scaling schemes for approximating FBM by BRWs}

Although the waiting time between successive jumps of $B^{(n)}$ is $1/n^{2}$, its jump mechanism closely mirrors that of $X$, with the distinction that the coefficient $\fp_j$ in \eqref{eq:22} now determines the probability of jumping from $0$ to the point $j/n$. This leads to the following heuristic observation. 

Fix $\varepsilon>0$. For sufficiently large $n$, any jump of $B^{(n)}$ away from $0$ will land in $(0,\varepsilon)$ with probability at least
\[
\sum_{j=1}^{\lfloor n\varepsilon\rfloor} \fp_j.
\]
If the coefficients $\fp_j$ were independent of $n$, then as $n\rightarrow\infty$ this sum would converge to the total probability of making a genuine jump away from $0$ (that is, excluding staying at $0$ or being killed). Consequently, almost all actual jumps of $B^{(n)}$ would occur arbitrarily close to $0$. This stands in sharp contrast with the FBM, whose boundary jumps are governed by the measure $p_4(\mathrm{d}x)$ and therefore retain a nontrivial probability of landing far from $0$. Thus, ensuring compatibility between the boundary jumping behavior of $B^{(n)}$ and that of the limiting FBM necessitates that the coefficients $\fp_j$ depend on $n$ and, for most indices $j$, vanish as $n \rightarrow \infty$.


This observation indicates that, slightly different from the classical Donsker invariance principle, the BRW itself should depend on $n$ when performing the rescaling. We denote it by $X^{(n)}$, with transition matrix $P^{(n)}$ and jumping measure $(\mathfrak{p}_j^{(n)})_{j\in \mathbb{N}_\Delta}$, respectively. 
For the sake of rigor, we rewrite $B^{(n)}$ as
\begin{equation}\label{eq:33}
    B^{(n)}_t:=\frac{1}{n}X^{(n)}_{\lfloor n^2t\rfloor},\quad t\geq 0. 
\end{equation}

Based on the intuitive description of the FBM's boundary behavior given after Theorem~\ref{THM23}, we now examine how the jumping measure $(\fp^{(n)}_j)_{j\in \mathbb{N}_\Delta}$ of $B^{(n)}$ should be specified so that its behavior near the boundary closely parallels that of an FBM with parameters  $(p_1, p_2, p_3, p_4)$.
In the first stage, the boundary behavior of $B^{(n)}$ similarly consists of sojourn and reflection components. Here, the probability $\fp^{(n)}_0$ of remaining at $0$ plays the role of the sojourn parameter, while the probability $\fp^{(n)}_1$ of jumping from $0$ to the adjacent point $1/n$ corresponds to $p_2$ and serves as the reflection parameter for $B^{(n)}$: the segment of the trajectory that starts at $0$, jumps to $1/n$, then evolves as a symmetric random walk, and finally returns to $0$, constitutes a single `excursion' of $B^{(n)}$ at the boundary.
The remaining probabilities $(\fp^{(n)}_j)_{j\ge 2}$ collectively represent jumps from $0$ into $(0,\infty)$, while $\fp^{(n)}_\Delta$ accounts for killing. Together, they capture the second stage of the boundary behavior.
If the behavior of $B^{(n)}$ closely approximates that of the FBM, we can estimate the expected duration of its first-stage boundary behavior:
\begin{equation}\label{eq:37-2}
    \xi^{(n)}\approx \frac{1}{n^2}\left[\frac{\fp^{(n)}_0}{\fp^{(n)}_\Delta+\sum_{j\geq 2}\fp^{(n)}_j}+\left( \frac{\fp^{(n)}_1}{\fp^{(n)}_\Delta+\sum_{j\geq 2}\fp^{(n)}_j}\right)^2 \right],
\end{equation}
analogously to the calculation in \eqref{eq:27}, taking into account that the unit time for $B^{(n)}$ (that is, the waiting time between successive jumps) is $1/n^2$. 

We further consider that the family $(\fp^{(n)}_j)_{j\in \bN_\Delta}$ admits a power-law asymptotic representation as $n\rightarrow \infty$, motivated by the observation made at the beginning of this subsection that the coefficients $\fp^{(n)}_j$ (for most $j$) must vanish as $n\to\infty$. In particular, the coefficients $\fp^{(n)}_j$ for $j\ge 2$ and the killing probability $\fp^{(n)}_\Delta$, which describe the second-stage jump behavior at the boundary, should decay at a comparable rate; otherwise, terms with faster decay would disappear in the limit and could be set equal to zero from the outset without affecting the limiting dynamics. Accordingly, we postulate that
\begin{equation}\label{eq:38-2}
\fp^{(n)}_0 \approx \frac{\mathfrak{c}_0}{n^\alpha}, \quad \fp^{(n)}_1 \approx \frac{\fc_1}{n^\beta}, \quad \fp^{(n)}_j \approx \frac{\fc_j}{n^\gamma},\; j\geq 2, \quad \fp^{(n)}_\Delta \approx \frac{\fc_\Delta}{n^\gamma},
\end{equation}
for some constants $\alpha, \beta\geq 0$ and $\gamma>0$ depending not on $n$. Here, $(\fc_j)_{j \ge 2}$ form a jumping measure close to \eqref{eq:26}, essentially obtained by discretizing $p_4$ (see \eqref{eq:311}); but each $\fc_j$ inevitably depends on $n$, since it represents the jump to the point $j/n$ that varies with $n$. The positivity of exponents $\alpha$ and $\beta$ is required to ensure that 
\begin{equation}\label{eq:39}
    \sum_{j\in \bN_\Delta} \fp^{(n)}_j = 1
\end{equation}
 as $n$ grows large. Substituting \eqref{eq:38-2} into \eqref{eq:37-2}, we have
 \begin{equation}\label{eq:312}
    \xi^{(n)}\approx \frac{\fc_0 \cdot n^{\gamma-\alpha-2}}{\fc_\Delta+\sum_{j\geq 2}\fc_j}+\left( \frac{\fc_1\cdot  n^{\gamma-\beta-1}}{\fc_\Delta+\sum_{j\geq 2}\fc_j}\right)^2.
\end{equation}
The expression for $\xi^{(n)}$ should be consistent with \eqref{eq:27}, and combined with the constraint \eqref{eq:39}, this places strong restrictions on the admissible values of the exponents $\alpha,\beta,\gamma$. Only two regimes are possible:
\begin{itemize}
\item[(1)] When $p_3\neq 0$ (hence $\fc_0\neq 0$), one must have $\alpha=0, \beta=1$, and $\gamma=2$. In this case, if $p_2=0$ then we may take $\fc_1=0$, so the value of $\beta$ is irrelevant.
\item[(2)] When $p_3=0$ and $p_2\neq 0$, we may take $\fc_0=0$, making $\alpha$ immaterial, while the other exponents must be $\beta=0$ and $\gamma=1$.
\end{itemize}
A careful comparison between \eqref{eq:312} and \eqref{eq:27} offers a clear heuristic guide for formulating a rigorous version of the asymptotic relation \eqref{eq:38-2} in terms of the parameters $p_1,p_2,p_3,p_4$. When $p_3\neq 0$, the precise asymptotic representation is given by \eqref{eq:311} and \eqref{eq:313}, while the second case is presented in \eqref{eq:316} and \eqref{eq:317}. For both cases, the corresponding convergence results will be formulated rigorously in the subsequent sections. 

When proving the convergence of $B^{(n)}$ to $B$ for these two cases, several difficulties arise. In the first case $p_3\neq 0$, the main difficulty lies in establishing the tightness of $B^{(n)}$. This issue is resolved by an application of Aldous’ tightness criterion, combined with suitable martingale estimates based on Lemma~\ref{LM21}. We emphasize that the key estimates and computations in this argument are carried out at the boundary point $0$; see in particular \eqref{eq:410}, \eqref{eq:419}, and \eqref{eq:515}. 
In the second case, where $p_2 \neq 0$ and $p_3 = 0$, even greater obstacles arise. In this case, the normalization in the definition of the jumping measure in \eqref{eq:316} involves a denominator of order $n$, rather than $n^2$ as in the previous case. As a consequence, the boundary estimates \eqref{eq:410} and \eqref{eq:419}, which were crucial in the first case, are no longer sufficient to close the argument. To overcome this difficulty, we make essential use of Lemma~\ref{LM22}. Since the process $X^{(n)}$ does not spend more than one unit of time at the boundary once it hits $0$ (that is, $\fp^{(n)}_0 = 0$), the total amount of time that $X^{(n)}$ spends at $0$ up to time $n^2 t$ is of order $\sqrt{n^2 t}$; see Lemma~\ref{LM81}. This yields an effective gain of one factor $n$ compared with the first case, precisely compensating for the missing factor $n$ in the denominator of \eqref{eq:316}.

\subsection{The case $p_2=p_3=0$}\label{SEC33}

It is also worth noting that, for the remaining case $p_2 = p_3 = 0$, one necessarily has $|p_4| = \infty$ by \eqref{eq:38}. The corresponding Feller’s Brownian motion does not appear to be amenable to approximation by a sequence $B^{(n)}$ whose jumping measures follow the power-law form \eqref{eq:38-2}, since imposing \eqref{eq:38-2} with \(\fc_0 = \fc_1 = 0\) seems incompatible with \eqref{eq:39}. In this setting, a more natural scaling scheme would be
\[
    \fp^{(n)}_\Delta=\frac{p_1}{p_1+p_4((1/n,n])},\quad \fp^{(n)}_j=\frac{p_4((j/n,(j+1)/n])}{p_1+p_4((1/n,n])},\;1\leq j\leq n^2-1,
\]
with $\fp^{(n)}_j:=0$ for $j=0$ or $j\geq n^2$. 

However, the fundamental difficulty of this approach lies in the fact that, as observed in Remark~\ref{RM45}, the growth rate of $p_4((1/n,n])$ as $n \rightarrow \infty$ is strictly slower than linear in $n$. From a technical standpoint, this precludes the direct adaptation of the argument used for the second case $p_2 \neq 0,\, p_3 = 0$. In particular, we cannot establish the tightness of $B^{(n)}$ under the aforementioned scaling. We therefore leave this case for future investigation.



\section{From BRWs to FBM: Rigorous results}\label{SEC4}

Based on the analysis conducted in the preceding section, we present the main result of this paper, with detailed proofs deferred to the subsequent sections.

\subsection{Realization on the Skorohod space}\label{SEC31}

The rescaled process defined in \eqref{eq:33} is a continuous-time stochastic process (though not a Markov chain) with state space
\[
E^{(n)}:=\frac{1}{n}\bN=\left\{0,\frac{1}{n}, \frac{2}{n},\cdots\right\}. 
\]
The cemetery for $B^{(n)}$ is still denoted by $\Delta$, and $E^{(n)}_\Delta:=E^{(n)}\cup \{\Delta\}$ stands for the one-point compactification of $E^{(n)}$.
Clearly, each sample path $[0,\infty)\ni t\mapsto B^{(n)}_t(\omega)$ of $B^{(n)}$ is a c\`adl\`ag function on $E^{(n)}_\Delta(\subset E_\Delta)$, and hence also a c\`adl\`ag function on $E_\Delta$. This shows that every $B^{(n)}$ can be realized as a process on the Skorohod space. 

More precisely, we denote the Skorohod space by
\[
D_{E_\Delta}:=D_{E_\Delta}([0,\infty))
\]
equipped with the Skorohod topology (see, e.g., \cite[Chapter 3]{B99}), which consists of all c\`adl\`ag functions from $[0,\infty)$ to $E_\Delta$. Let $(\pi_t)_{t\geq 0}$ be the canonical projection family on $D_{E_\Delta}$; that is, for $t\geq 0$ and $w\in D_{E_\Delta}$, $\pi_t (w):=w(t)$. Then $\mathscr D_t:=\sigma(\{\pi_s: 0\leq s\leq t\})$, $t\geq 0$, forms the natural filtration on $D_{E_\Delta}$, and $\mathscr D:=\sigma(\{\pi_t:t\geq 0\})$ coincides with the Borel $\sigma$-algebra on $D_{E_\Delta}$ generated by the Skorohod topolopgy (see, e.g., \cite[Theorem~16.6]{B99}). 

Recall that $B^{(n)}$ is defined on the probability space $(\Omega, \mathscr G,\bP)$ associated with $X^{(n)}$. (Strictly speaking, this probability space may depend on $n$; however, since this dependence plays no essential role in the subsequent discussion, we suppress the index $n$ for simplicity).  Let $\sG^{(n)}_t:=\sigma(\{B^{(n)}_s:0\leq s\leq t\})$, $t\geq 0$, denote the natural filtration of $B^{(n)}$. It is straightforward to verify that
\begin{equation}\label{eq:35}
\sG^{(n)}_t=\sG^{X^{(n)}}_m,\quad \forall  t\in [m/n^2,(m+1)/n^2), m\geq 0,
\end{equation}
where $(\sG^{X^{(n)}}_m)_{m\geq 0}$ is the natural filtration of $X^{(n)}$. Consider the map
\[
\mathcal B^{(n)}: \Omega\rightarrow D_{E_\Delta}, \quad \omega\mapsto B^{(n)}_\cdot (\omega),
\]
where $B^{(n)}_\cdot(\omega)$ denotes the c\`adl\`ag function $w^{(n)}(t):=B^{(n)}_t(\omega)$ for $t\geq 0$. Note that $\mathcal B^{(n)}$ is both $\sG/\sD$- and $\sG^{(n)}_t/\sD_t$-measurable. In particular, the image measure of $\bP$
\begin{equation}\label{eq:34}
\bQ^{(n)}(\Lambda):=\bP\left(\left\{\omega\in \Omega: \cB^{(n)}(\omega)\in \Lambda\right\}\right),\quad \forall \Lambda\in \sD
\end{equation}
gives a probability measure on $(D_{E_\Delta}, \sD)$, and the corresponding canonical process on the Skorohod space $D_{E_\Delta}$, i.e., the following collection
\[
\left\{D_{E_\Delta},\sD,\sD_t, \pi_t,\bQ^{(n)}\right\},
\]
has the same finite-dimensional distributions as $B^{(n)}$. We refer to $\bQ^{(n)}$ as the \emph{law} of $B^{(n)}$ on the Skorohod space. 

Since every Feller process has a.s. c\`adl\`ag sample paths, the FBM described in Definition~\ref{DEF22} can likewise be realized on the Skorohod space $D_{E_\Delta}$. More precisely, for each $x\in E$, the FBM starting from $x$ can be represented as a law $\bQ^x:=\bP^x\circ \mathcal{B}^{-1}$ on $D(E_\Delta)$ via the map
\[
\mathcal B: \Omega\rightarrow D_{E_\Delta}, \quad \omega\mapsto B_\cdot (\omega). 
\]

In what follows, when the starting point $x \in E$ of the FBM is fixed, we write $\bQ^{(n)} \Rightarrow \bQ^{x}$  to indicate that $\bQ^{(n)}$ converges weakly to $\bQ^{x}$ on the Skorohod space $D_{E_\Delta}$. This means that for any bounded continuous function $F$ on $D_{E_\Delta}$, 
\[
\lim_{n\rightarrow\infty}\int_{D_{E_\Delta}}F(w)\bQ^{(n)}(\dd w)=\int_{D_{E_\Delta}}F(w)\bQ^x(\dd w).
\]
By the definition of the image measures $\bQ^{(n)}$ and $\bQ^{x}$, this is equivalent to
\[
    \lim_{n\rightarrow \infty} \bE\left[F(\mathcal{B}^{(n)})\right]=\bE^x \left[F(\mathcal B)\right]. 
\]
Thus, even though $B^{(n)}$ and $B$ are not defined on the same probability space, we say that $B^{(n)}$ converges weakly to $B$, denoted by $B^{(n)} \Rightarrow B$, if the law of $B^{(n)}$ on $D_{E_\Delta}$ converges weakly to that of $B$. This terminology and notation will also be used for other sequences of c\`adl\`ag processes.

\begin{remark}\label{RM31}
Two c\`adl\`ag processes are said to be \emph{equivlant} if their laws on the Skorohod space coincide. Equivalently, this means that they have the same family of finite-dimensional distributions.
It is important to emphasize that weak convergence depends solely on the laws of the processes and is independent of the underlying probability spaces on which they are realized. In particular, if $B^{(n)}$ and $\tilde{B}^{(n)}$ share the same law, then $B^{(n)} \Rightarrow B$ automatically implies $\tilde{B}^{(n)} \Rightarrow B$.
\end{remark}

\subsection{Main result for $p_3\neq 0$}

The first case, $p_3\neq 0$, is handled as follows. The proof will be completed in Sections \ref{Se4} and \ref{SEC5}.

\begin{theorem}\label{THM31}
Let $B$ be an FBM with parameters $(p_1,p_2,p_3,p_4)$ as specified in Theorem~\ref{THM23}, with $p_3\neq 0$. 
For any $x_0\in E$ and $n\geq 2$, consider the BRW $X^{(n)}$  starting from $\lfloor nx_0\rfloor$, whose jumping measure is given by
\begin{equation}\label{eq:311}
\begin{aligned}
&\fp^{(n)}_\Delta:=\frac{p_1}{n^2p_3},\quad \fp^{(n)}_1:=\frac{p_2}{np_3}, \\
&\fp^{(n)}_j:=\frac{p_4\left((j/n,(j+1)/n]\right)}{n^2p_3},\; 2\leq j\leq n^2-1,\quad \fp^{(n)}_j:=0,\; j\ge n^2,
\end{aligned}\end{equation}
and 
\begin{equation}\label{eq:313}
\fp^{(n)}_0:=1-\fp^{(n)}_\Delta-\sum_{j=1}^\infty\fp^{(n)}_j. 
\end{equation}
Let $B^{(n)}$ denote the rescaled process of $X^{(n)}$ as in \eqref{eq:33}. Then, as $n\rightarrow \infty$, $B^{(n)}$ converges weakly to the FBM $B$ starting from $x_0$, i.e., $B^{(n)}\Rightarrow B$. 
\end{theorem}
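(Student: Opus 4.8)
\emph{Proof strategy.} The plan is the two-step scheme announced in Section~\ref{SEC1}: first establish tightness of the laws $\{\bQ^{(n)}\}_{n\ge 2}$ on the Skorohod space $D_{E_\Delta}$, and then identify every subsequential weak limit as the law $\bQ^{x_0}$ of the FBM $B$ started from $x_0$, by appealing to the well-posedness of the martingale problem for the Feller generator $(\sL,\cD(\sL))$ of Theorem~\ref{THM23}. Because $E_\Delta$ is compact, tightness of the one-dimensional marginals is automatic, so by Aldous' criterion \cite{Al78} it is enough to control the increments $B^{(n)}_{\tau_n+\theta}-B^{(n)}_{\tau_n}$ over vanishing time windows $\theta\le\delta$, uniformly over $(\sG^{(n)}_t)$-stopping times $\tau_n\le T$.

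For tightness I would fix a smooth, bounded, injective test function $\phi$ on $E$ with bounded derivatives (e.g.\ $\phi(x)=1/(1+x)$, which extends continuously and injectively to $E_\Delta$), set $f_n:=\phi(\cdot/n)$ on $\bN$, and invoke Lemma~\ref{LM21} (applicable since $\fp^{(n)}_j=0$ for $j\ge n^2$) to obtain the square-integrable martingale $N^{(n)}_m:=f_n(X^{(n)}_m)-f_n(X^{(n)}_0)-\sum_{k=0}^{m-1}(P^{(n)}-I)f_n(X^{(n)}_k)$. The two estimates that drive everything are: (i) $\sup_i|(P^{(n)}-I)f_n(i)|=O(1/n^2)$, and (ii) $\sup_i\E\big[(N^{(n)}_{m+1}-N^{(n)}_m)^2\mid X^{(n)}_m=i\big]=O(1/n^2)$. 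On the interior these follow from Taylor expansion and $\|\phi''\|_u,\|\phi'\|_u<\infty$; at the boundary $i=0$ they follow from the explicit weights \eqref{eq:311} together with $|\phi(j/n)-\phi(0)|\le C(1\wedge (j/n))$ and $\int_{(0,\infty)}(1\wedge x)\,p_4(\dd x)<\infty$, which make $\sum_{j\ge 1}\fp^{(n)}_j(1\wedge(j/n))=O(1/n^2)$. Summing (ii) gives $\langle N^{(n)}\rangle_m\le Cm/n^2$, so over the $\lfloor n^2\theta\rfloor+1\le n^2\delta+1$ steps corresponding to a window $\theta\le\delta$ the martingale part has $L^2$-increment $O(\delta+n^{-2})$, while by (i) the compensator part has increment $O(\delta+n^{-2})$; hence $\E\big[(\phi(B^{(n)}_{\tau_n+\theta})-\phi(B^{(n)}_{\tau_n}))^2\big]=O(\delta+n^{-2})$, and since $\phi$ is a homeomorphism of the compact $E_\Delta$ onto its image, Chebyshev's inequality yields Aldous' condition.

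For the identification step, let $\bQ^*$ be any subsequential weak limit; then $\bQ^*(\pi_0=x_0)=1$ because $B^{(n)}_0=\lfloor nx_0\rfloor/n\to x_0$. Fix $f\in\cD(\sL)$ and put $f_n:=f(\cdot/n)$; the key analytic input is that $n^2(P^{(n)}-I)f_n(i)\to\tfrac12 f''(i/n)=\sL f(i/n)$ uniformly in $i\in\bN$. On the interior this is Taylor expansion (using $f''\in C_0(E)$ and Remark~\ref{RM24} to treat $i=1$); at $i=0$ one computes from \eqref{eq:311}, using $n(f(1/n)-f(0))\to f'(0)$ (Remark~\ref{RM24}) and the Riemann--Stieltjes convergence $\sum_{j=2}^{n^2-1}p_4((j/n,(j+1)/n])\,(f(j/n)-f(0))\to\int_{(0,\infty)}(f(x)-f(0))\,p_4(\dd x)$, that $n^2(P^{(n)}-I)f_n(0)\to \tfrac1{p_3}\big[p_2f'(0)-p_1f(0)+\int_{(0,\infty)}(f(x)-f(0))\,p_4(\dd x)\big]$, which equals $\tfrac12 f''(0)$ by the boundary condition in \eqref{eq:37}. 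Consequently $\sum_{k=0}^{\lfloor n^2 t\rfloor-1}(P^{(n)}-I)f_n(X^{(n)}_k)=\int_0^{\lfloor n^2t\rfloor/n^2}\sL f(B^{(n)}_s)\,\dd s+R_n(t)$ with $\sup_{t\le T}|R_n(t)|\to0$ deterministically, so the path functional $w\mapsto f(w(t))-f(w(0))-\int_0^t\sL f(w(s))\,\dd s$ differs from the $(\sD_t,\bQ^{(n)})$-martingale $N^{(n)}$ by a uniformly small amount. Passing to the limit along the subsequence in the martingale identity $\E^{\bQ^{(n)}}[(N^{(n)}_t-N^{(n)}_s)\,\Phi]=0$, evaluated at times $s<t$ outside the countable set of fixed discontinuities of $\bQ^*$ where the relevant bounded functionals are $\bQ^*$-a.s.\ continuous, shows that $f(\pi_t)-f(\pi_0)-\int_0^t\sL f(\pi_s)\,\dd s$ is a $\bQ^*$-martingale for every $f\in\cD(\sL)$. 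By uniqueness of the martingale problem for the Feller process with generator $(\sL,\cD(\sL))$, $\bQ^*=\bQ^{x_0}$; since every subsequential limit coincides with $\bQ^{x_0}$ and the family is tight, $B^{(n)}\Rightarrow B$.

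The main obstacle is the tightness, exactly as flagged in Section~\ref{SEC3}: the interior increments are classical, but the boundary analysis must show that the (possibly infinite) jump mechanism governed by $p_4$ together with the reflection weight $\fp^{(n)}_1$ does not spoil the $O(1/n^2)$ control of $(P^{(n)}-I)f_n(0)$ and of the conditional variance at $0$; this is precisely where the normalization by $n^2p_3$ in \eqref{eq:311} and the integrability \eqref{eq:330} enter, and it explains why $p_3\ne0$ cannot be relaxed in this theorem. A secondary technical point is the Riemann-sum convergence of the boundary compensator, where the truncation at $j<n^2$ and the possible singularity of $p_4$ near $0$ must both be absorbed using $|f(x)-f(0)|\le C(1\wedge x)$.
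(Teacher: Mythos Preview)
Your proposal is correct and follows essentially the same approach as the paper: Aldous' tightness criterion driven by the martingale of Lemma~\ref{LM21} together with the key $O(1/n^2)$ boundary estimates for $(P^{(n)}-I)f_n(0)$ and the conditional one-step variance at $0$ (exactly the paper's \eqref{eq:410} and \eqref{eq:419}), followed by identification via the martingale problem using the uniform convergence $n^2(P^{(n)}-I)f_n\to \sL f$ (the paper's \eqref{eq:510}--\eqref{eq:511}). The only cosmetic differences are that the paper reduces tightness to $f(B^{(n)})$ for all $f\in\cD(\sL)$ via \cite[Chapter~3, Theorem~9.1]{EK86} rather than through a single homeomorphism $\phi$, and it packages the limit passage as weak convergence of processes $N^{(n)}\Rightarrow N^f$ plus \cite[IX, Corollary~1.19]{JS03} rather than taking limits in the martingale identity at continuity times; the substance of the argument is identical.
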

\begin{remark}\label{RM33} 
\begin{itemize}
\item[(1)] In \eqref{eq:311}, we set $\fp^{(n)}_j = 0$ for all $j \geq  n^2$ in order to facilitate the application of Lemma~\ref{LM21}. This truncation does not fundamentally affect the approximation scheme given by the power-law asymptotics in \eqref{eq:38-2}.
\item[(2)] Strictly speaking, $\fp^{(n)}_0$ defined as \eqref{eq:313} may initially be negative. However, we note that
    \[
        \sum_{j\geq 2}\fp^{(n)}_j=\frac{1}{n^2p_3}p_4((2/n,n])\leq \frac{\int_{(0,\infty)}(1\wedge x)\, p_4(\dd x)}{2np_3},
    \]
    which implies $\lim_{n\rightarrow \infty}\fp^{(n)}_0=1$. Consequently, for sufficiently large $n$, the possibility that $\fp^{(n)}_0 < 0$ no longer arises.
\end{itemize}
\end{remark}

\subsection{Main result for $p_2\neq 0,p_3=0$}

The other case $p_2\neq 0,p_3=0$ can be addressed analogously as follows. The proof is presented in Section~\ref{SEC7}. 

\begin{theorem}\label{THM35}
    Let $B$ be an FBM with parameters $(p_1,p_2,p_3,p_4)$ as specified in Theorem~\ref{THM23}, with $p_2\neq 0$ and  $p_3=0$. 
For any $x_0\in E$ and $n\geq 2$, consider the BRW $X^{(n)}$  starting from $\lfloor nx_0\rfloor$, whose jumping measure is given by
\begin{equation}\label{eq:316}
\begin{aligned}
&\fp^{(n)}_\Delta:=\frac{p_1}{np_2},\quad \fp^{(n)}_0:=0, \\
&\fp^{(n)}_j:=\frac{p_4\left((j/n,(j+1)/n]\right)}{np_2},\; 2\leq j\leq n^2-1,\quad \fp^{(n)}_j:=0,\; j\geq n^2,
\end{aligned}\end{equation}
and 
\begin{equation}\label{eq:317}
\fp^{(n)}_1:=1-\fp^{(n)}_\Delta-\sum_{j=2}^\infty\fp^{(n)}_j. 
\end{equation}
Let $B^{(n)}$ denote the rescaled process of $X^{(n)}$ as in \eqref{eq:33}. Then, as $n\rightarrow \infty$, $B^{(n)}$ converges weakly to the FBM $B$ starting from $x_0$, i.e., $B^{(n)}\Rightarrow B$. 
\end{theorem}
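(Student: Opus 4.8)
The plan is to follow the same two‑step strategy as in Theorem~\ref{THM31}: establish tightness of the laws $\{\bQ^{(n)}\}$ on $D_{E_\Delta}$ via the Aldous criterion, and then identify every subsequential weak limit with $\bQ^{x_0}$ by showing it solves the martingale problem associated with the generator $(\sL,\cD(\sL))$ of Theorem~\ref{THM23}, which has a unique solution since $B$ is Feller. For the martingale problem step one picks a convenient core of $\cD(\sL)$ (functions $f\in C_0(E)\cap C^2$ with $f''\in C_0(E)$ satisfying the boundary relation with $p_3=0$, i.e.\ $p_1f(0)-p_2f'(0)+\int_{(0,\infty)}(f(0)-f(x))\,p_4(\dd x)=0$), applies Lemma~\ref{LM21} to $X^{(n)}$ to get the discrete martingale $M^f_m$, rescales it, and shows that the drift term $\sum_{k=0}^{\lfloor n^2 t\rfloor-1}(P^{(n)}-I)f(X^{(n)}_k)$ converges uniformly in probability to $\int_0^t \sL f(B_s)\,\dd s$. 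Away from $0$ the one‑step generator $(P^{(n)}-I)f(i/n)=\tfrac12\big(f((i+1)/n)+f((i-1)/n)-2f(i/n)\big)\approx \tfrac{1}{2n^2}f''(i/n)$ is exactly the Donsker computation; at $0$ the one‑step increment is $\sum_{j}\fp^{(n)}_j f(j/n)-f(0)$, and the chosen scaling \eqref{eq:316}--\eqref{eq:317} is designed precisely so that $n\cdot\big(\sum_j\fp^{(n)}_j f(j/n)-f(0)\big)\to -\tfrac{1}{p_2}\big(p_1f(0)-p_2f'(0)+\int(f(0)-f(x))p_4(\dd x)\big)=f'(0)$ using $\fp^{(n)}_1\to 1$, $\fp^{(n)}_1\cdot(f(1/n)-f(0))\approx f'(0)/n$, $n\fp^{(n)}_\Delta\to p_1/p_2$ with $f(\Delta)=0$, and $n\sum_{j\ge2}\fp^{(n)}_j f(j/n)=\tfrac{1}{p_2}\int_{(2/n,n]} f(x)\,p_4^{(n)}(\dd x)\to \tfrac{1}{p_2}\int_{(0,\infty)} f(x)\,p_4(\dd x)$ (here $p_4^{(n)}$ is the obvious discretization, and one uses \eqref{eq:330}, $f\in C_0$, and Remark~\ref{RM24} that $f'$ is bounded to control the region near $0$). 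The point is that here each visit to $0$ contributes a boundary term of order $n^{-1}$ rather than $n^{-2}$, so the number of visits to $0$ up to time $n^2t$ must be shown to be only of order $n$, not $n^2$; this is exactly where Lemma~\ref{LM22} and the referenced Lemma~\ref{LM81} enter, giving $\sum_{k<n^2t}\bP(X^{(n)}_k=0)=O(n)$ because $\fp^{(n)}_0=0$ forces the walk to leave $0$ immediately.

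Concretely, for the drift convergence I would split $\sum_{k=0}^{\lfloor n^2t\rfloor-1}(P^{(n)}-I)f(X^{(n)}_k)=\sum_{k:\,X^{(n)}_k\ge1}(\cdots)+\sum_{k:\,X^{(n)}_k=0}(\cdots)$. The interior sum is handled by a Taylor expansion, $\tfrac12 n^{-2}f''(X^{(n)}_k/n)+o(n^{-2})$ uniformly, using $f''\in C_0(E)$ and uniform continuity of $f''$ on $[0,\infty)$; summing over $k<n^2t$ gives a Riemann‑sum approximation of $\int_0^t\tfrac12 f''(B^{(n)}_s)\,\dd s$, and by tightness plus the Skorohod representation this converges to $\int_0^t \sL f(B_s)\,\dd s$. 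The boundary sum has $O(n)$ many terms by Lemma~\ref{LM81}, each of size $O(n^{-1})$, hence is $O(1)$ and in fact converges to $0$: indeed each boundary increment equals $n^{-1}\cdot\big(n(\sum_j\fp^{(n)}_j f(j/n)-f(0))\big)$, and by the computation above the bracket tends to $f'(0)$, so the boundary sum behaves like $n^{-1}\cdot f'(0)\cdot\#\{k<n^2t:X^{(n)}_k=0\}$; but the crucial observation from \eqref{eq:316}--\eqref{eq:317} is that the \emph{correct} boundary contribution to the generator is already absorbed into the definition of $\cD(\sL)$ — functions in the core satisfy the boundary condition, which forces the limiting boundary drift to vanish, so it suffices to show $n^{-1}\#\{k<n^2t:X^{(n)}_k=0\}$ stays tight and the bracket converges uniformly, after which the boundary term is shown to be negligible in the limit. (This mirrors how the local‑time normalization $\xi_2\approx (p_2/(p_1+|p_4|))^2$ in \eqref{eq:27} is matched by the $n^{-1}$ denominator in \eqref{eq:316}.)

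For tightness I would verify the two Aldous conditions for $\{\bQ^{(n)}\}$: (a) compact containment, i.e.\ for each $t,\eta$ a compact $K_\eta\subset E_\Delta$ with $\sup_n\bP(B^{(n)}_s\notin K_\eta\ \text{for some }s\le t)<\eta$ — this follows because in $E_\Delta$ (the one‑point compactification) the only escape is to $\Delta$, and one controls $\bP(\zeta^{(n)}\le t)$ together with a maximal inequality for the interior excursions of the rescaled random walk, which behave like Brownian motion; and (b) for stopping times $\tau_n\le t$ bounded above, $B^{(n)}_{\tau_n+\delta_n}-B^{(n)}_{\tau_n}\to 0$ in probability whenever $\delta_n\downarrow 0$. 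For (b) I would use the martingale $M^f_m$ from Lemma~\ref{LM21} with a suitable bump function (or directly $f(x)=x$ truncated) to get $\E\big[(B^{(n)}_{(\tau_n+\delta_n)}-B^{(n)}_{\tau_n})^2\big]\le C\E\big[\int_{\tau_n}^{\tau_n+\delta_n}(\text{quadratic variation density})\big]$, where the interior contribution to the predictable quadratic variation is $\approx \delta_n$ (Donsker) and the boundary contribution over a time window of length $\delta_n$ is bounded, via Lemma~\ref{LM81}, by $n^{-1}\cdot\#\{k\in[n^2\tau_n,n^2(\tau_n+\delta_n)]:X^{(n)}_k=0\}$ times the square of a boundary jump, and here the fact that $\fp^{(n)}_0=0$ means successive returns to $0$ are separated by genuine interior excursions whose total number in a window of length $n^2\delta_n$ is $O(n\sqrt{\delta_n})$ — giving a bound $o(1)$ as $\delta_n\to0$ uniformly in $n$. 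The main obstacle, exactly as the authors flag in Section~\ref{SEC3}, is this tightness estimate at the boundary: because the normalization in \eqref{eq:316} carries a denominator of order $n$ rather than $n^2$, a single visit to $0$ now produces an $O(n^{-1})$ displacement, and one must show that such visits are sufficiently rare ($O(n)$ up to time $n^2t$, $O(n\sqrt{\delta_n})$ over a window of length $\delta_n$) to keep the oscillations small; this is precisely the content of Lemma~\ref{LM22} / Lemma~\ref{LM81} and is the step that requires the most care, since the naive boundary bounds \eqref{eq:410} and \eqref{eq:419} used when $p_3\neq0$ no longer close the argument.
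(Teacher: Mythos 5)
Your overall strategy coincides with the paper's: Aldous tightness plus identification of subsequential limits through the martingale problem, with Lemma~\ref{LM22}/Lemma~\ref{LM81} supplying the crucial $O(\sqrt{m})$ bound on the expected number of visits to $0$ in $m$ steps (hence $O(n\sqrt{\delta})$ visits in a window of length $\delta$), which compensates for the $O(n^{-1})$ size of each boundary increment. That is exactly the mechanism the paper uses. However, three points in your write-up need repair.

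First, your claim that $n\bigl(\sum_j\fp^{(n)}_j f(j/n)-f(0)\bigr)\to f'(0)$ is internally inconsistent: the limit you display, $-\tfrac{1}{p_2}\bigl(p_1f(0)-p_2f'(0)+\int_{(0,\infty)}(f(0)-f(x))\,p_4(\dd x)\bigr)$, equals $0$ for every $f\in\cD(\sL)$ by the boundary condition, not $f'(0)$. This is not a cosmetic issue — the whole argument closes precisely because the rescaled boundary generator $n\,(P^{(n)}-I)f_n(0)$ tends to $0$ (the paper's $\gamma(n)\to 0$ in \eqref{eq:79}), so that $O(n)$ visits, each contributing $o(n^{-1})$, give a negligible total. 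If the bracket genuinely tended to $f'(0)\neq 0$, the boundary sum would be $O(1)$ and would not vanish. Second, you cannot split off $n\sum_{j\ge2}\fp^{(n)}_j f(j/n)\to\tfrac{1}{p_2}\int f\,\dd p_4$ as a separate limit: when $|p_4|=\infty$ the integral $\int f\,\dd p_4$ need not converge. You must keep the differences $f(j/n)-f(0)$ grouped, as the paper does with $h_n(x)$ and the dominating function $(\|f'\|_u+2\|f\|_u)(1\wedge x)$. Third, for tightness your proposed bound $\E[(B^{(n)}_{\tau+\delta}-B^{(n)}_{\tau})^2]\le C\,\E[\cdots]$ applied to the full increment does not close for the drift part: the square of the boundary drift sum involves $(\#\{\text{visits}\})^2$, and Lemma~\ref{LM81} controls only the first moment of the visit count (the crude bound $(\#\{\text{visits}\})^2\le(n^2\delta+1)\cdot\#\{\text{visits}\}$ yields $O(n\delta^{3/2})$, which diverges in $n$). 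The paper avoids this by estimating $\E\bigl[|f(B^{(n)}_{\tau+\delta})-f(B^{(n)}_{\tau})|\bigr]\le\sqrt{J_1}+J_2'$, i.e.\ a second moment (via orthogonality of martingale increments) only for the martingale part and a first moment for the drift part; you should do the same.
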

\begin{remark}\label{RM45}
As in the situation described in Remark~\ref{RM33}~(2), we likewise have $\fp^{(n)}_1>0$ for all sufficiently large $n$. More precisely, the following limit holds:
\begin{equation}\label{eq:319-2}
     \lim_{n\rightarrow \infty}\fp^{(n)}_1=1.
\end{equation}
To verify this, we first note that by an integration-by-parts formula (see, e.g., \cite[Theorem~3.36]{F99}), for any $\varepsilon>0$,
\[
    \int_{(\varepsilon,1]}x\,p_4(\dd x)=\int_{(\varepsilon,1]}p_4((x,1])\dd x+\varepsilon \cdot p_4((\varepsilon,1]). 
\]
Since $\int_{(0,1]}x\,p_4(\dd x)=\lim_{\varepsilon\downarrow 0}\int_{(\varepsilon,1]}x\,p_4(\dd x)<\infty$, it follows that the limit $$\lim_{\varepsilon\downarrow 0}\varepsilon \cdot p_4((\varepsilon,1])$$ exists. We next observe that, by Stolz's theorem, 
\[
   \lim_{n\rightarrow \infty}\frac{p_4((1/2^n,1])}{2^{n}}=\lim_{n\rightarrow \infty}\frac{p_4((1/2^{n+1},1/2^n])}{2^{n}}\leq \lim_{n\rightarrow \infty}2\int_{(1/2^{n+1},1/2^n]}x\,p_4(\dd x)=0. 
\]
Consequently, $\lim_{\varepsilon\downarrow 0}\varepsilon \cdot p_4((\varepsilon,1])=0$. In particular, 
\[
    \lim_{n\rightarrow \infty}\fp^{(n)}_1=1-\lim_{n\rightarrow \infty}\frac{p_1+p_4((2/n,n])}{np_2}=1.
\]
This establishes \eqref{eq:319-2}. 
\end{remark}

\subsection{The special case $p_4=0$}

Let us examine the special case $p_4=0$.  
Our main results encompass the Donsker invariance principle for the classical models described below.
When $p_3\neq 0$, we can use $B^{(n)}$ defined as \eqref{eq:33} with 
    \[
    \fp^{(n)}_\Delta=\frac{p_1}{n^2p_3},\quad \fp_0^{(n)}=1-\frac{p_1}{n^2p_3}-\frac{p_2}{np_3},\quad \fp_1^{(n)}=\frac{p_2}{np_3},\quad \fp_j^{(n)}=0, \; j\geq 2,
    \]
    to approximate the following special FBMs:
    \begin{itemize}
        \item[(1a)] \emph{absorbed Brownian motion} for $p_1=p_2=0$, which, once hitting 0, remains there forever. 
        \item[(1b)] \emph{exponential holding Brownian motion} for $p_1\neq 0, p_2=0$. (The terminology is borrowed from \cite{EFJP24}.) This FBM, upon hitting $0$, remains there for an exponentially distributed sojourn time with mean $p_3/p_1$, after which it is killed. See \cite[\S9]{IM63} or \cite[\S3.2.1]{Li25} for more details about this process. 
        \item[(1c)] \emph{sticky Brownian motion} for $p_1=0, p_2\neq 0$, which is the time-changed process of reflected Brownian motion with speed measure $\frac{p_3}{2p_2}\delta_0+\lambda$, where $\lambda$ stands for the Lebesgue measure on $[0,\infty)$. See \cite[Corollary~3.5]{Li25} for more descriptions about this characterization.
        \item[(1d)] \emph{mixed Brownian motion} for $p_1,p_2\neq 0$. This terminology is also borrowed from \cite{EFJP24}. 
    \end{itemize}
    When $p_3=0$ and $p_2\neq 0$, the process $B^{(n)}$ with
 \[
    \fp^{(n)}_\Delta=\frac{p_1}{np_2},\quad \fp_0^{(n)}=0,\quad \fp_1^{(n)}=1-\frac{p_1}{np_2},\quad \quad \fp_j^{(n)}=0, \; j\geq 2,
    \]
    may be utilized to approximate the special FBMs as below:
    \begin{itemize}
        \item[(2a)] \emph{reflected Brownian motion} for $p_1=0$, whose infinitesimal generator is the \emph{Neumann Laplacian} (that is, the Lapalacian subject to the boundary condition $f'(0)=0$), multiplied by the factor $1/2$.
        \item[(2b)] \emph{elastic Brownian motion} for $p_1\neq 0$, whose infinitesimal generator is the \emph{Robin Laplacian} (that is, the Lapalacian subject to the boundary condition $f'(0)=\frac{p_1}{p_2}f(0)$), multiplied by the factor $1/2$.
    \end{itemize}   
    
Formally speaking, our result does not cover the invariance principle for killed Brownian motion (whose infinitesimal generator is the \emph{Dirichlet Lapalacian}, i.e., the Laplacian subject to the Dirichlet boundary condition $f(0)=0$, multiplied by $1/2$), since the latter is a Feller process on $(0,\infty)$ rather than on $[0,\infty)$. Nevertheless, if one interprets the absorbing boundary $0$ in case (1a) as the cemetery point $\Delta$, then the transformed `absorbed Brownian motion' has the same law as the killed Brownian motion. Concretely, let $X^{(n)}$ be a BRW on $\mathbb{N}\setminus\{0\}$ which, at the boundary site $1$, jumps to the site $2$ with probability $1/2$ and is killed with probability $1/2$ (this $X^{(n)}$ is in fact independent of $n$). The associated processes $B^{(n)}$ then converge weakly, as $n\to\infty$, to the killed Brownian motion on $(0,\infty)$.


\section{Proof of Theorem~\ref{THM31}: Tightness}\label{Se4}

Let $B^{(n)}$ be the rescaled processes described in Theorem~\ref{THM31}. In this section, we establish their tightness, i.e., the family $\{\bQ^{(n)}: n\geq 1\}$ of probability measures on $(D_{E_\Delta},\sD)$ defined as \eqref{eq:34} is tight. Concretely, for any $\varepsilon>0$, there exists a compact subset $K_\varepsilon$ of $D_{E_\Delta}$ with respect to the Skorohod topology such that $\inf_{n\geq 1}\bQ^{(n)}(K_\varepsilon)>1-\varepsilon$. 
A detailed discussion of this notion of tightness can be found in \cite{B99}. 

\subsection{Strategy of the proof}

Before proceeding, we introduce some notations for the special Skorohod space of real-valued functions. 
 Let $D_{\bR}:=D_{\bR}([0,\infty))$ be the Skorohod space consisting of all real-valued c\`adl\`ag functions on $[0,\infty)$. To distinguish it from $\sD$, we denote by $\sD_{\mathbb{R}}$ the Borel $\sigma$-algebra on $D_{\mathbb{R}}$. Denote by $\Lambda$ the class of all strictly increasing, continuous maps from $[0,\infty)$ onto itself, namely, each $\lambda\in \Lambda$ satisfies $\lambda(0)=0$ and $\lim_{t\rightarrow \infty}\lambda(t)=\infty$.
For $\lambda\in \Lambda$, define $\gamma(\lambda):=\sup_{0\leq t<s}\left|\log \frac{\lambda(s)-\lambda(t)}{s-t} \right|$. Note that 
\begin{equation}\label{eq:513}
    d(w_1,w_2):=\inf_{\lambda\in \Lambda, \gamma(\lambda)<\infty}\left[\gamma(\lambda)\vee \int_0^\infty e^{-s}d(w_1,w_2;\lambda, s)\dd s\right],\quad w_1,w_2\in D_\bR,
\end{equation}
where $d(w_1,w_2;\lambda, s):=\sup_{t\geq 0}|w_1(t\wedge s)-w_2(\lambda(t)\wedge s)| \wedge 1$, is a complete metric on $D_\bR$, which induces the Skorohod topology; see, e.g., \cite[Theorem~5.6 of Chapter 3]{EK86}.   
 It should be also pointed out that $d(w_k,w)\rightarrow 0$ for $w_k,w\in D_\bR$ if and only if, for any $T>0$, there exist functions $\lambda_k\in \Lambda$ such that 
\[
    \lim_{k\rightarrow \infty}\|w_k\circ \lambda_k-w\|_{u,T}=0
\]
and
\begin{equation}\label{eq:55}
    \lim_{k\rightarrow \infty}\sup_{t\in [0,T]}\|\lambda_k(t)-t\|=0,
\end{equation}
where $\|w_k\circ \lambda_k-w\|_{u,T}:=\sup_{t\in [0,T]}|w_k(\lambda_k(t))-w(t)|$ denotes the uniform norm on $[0,T]$; see, e.g., \cite[Proposition 5.3 of Chapter 3]{EK86}. 
Further, let $C_\bR:=C([0,\infty))$ denote the space of all continuous real-valued functions on $[0,\infty)$, endowed with the locally uniform topology. That is, $w_k\rightarrow w$ in $C_\bR$ if and only if, for any $T>0$, $\lim_{k\rightarrow \infty}\|w_k-w\|_{u,T}=0$. 
 Let $D_\bR\times C_\bR$ be the product space of $D_\bR$ and $C_\bR$, endowed with the product topology, whose Borel $\sigma$-algebra is denoted by $\sD_\bR\times \mathscr C_\bR$. 

Recall that $\cD(\sL)$, defined as \eqref{eq:37}, is dense in $C_0(E)$ with respect to the uniform norm. Observe that $E_\Delta$ may be identified with $[0,\infty]$, and every $f\in C_0(E)$ can be regarded as an element of $C(E_\Delta)$ by setting $f(\Delta)=0$. The main strategy of our proof is to apply the following result, taken from \cite[Chapter 3, Theorem 9.1]{EK86}.

\begin{lemma}
 Assume that for any $f\in \cD(\sL)$,  the family $\{f(B^{(n)}): n\geq 1\}$ is tight on $(D_\bR,\sD_\bR)$.
Then $\{B^{(n)}: n\geq 1\}$ is tight on $(D_{E_\Delta},\sD)$. 
\end{lemma}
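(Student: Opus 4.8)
The plan is to obtain the statement as a specialization of the relative-compactness criterion \cite[Chapter 3, Theorem 9.1]{EK86}; in the present setting the only hypotheses of that criterion not already granted are the \emph{compact containment condition} for $\{B^{(n)}\}$ and the requirement that $\cD(\sL)$ be an admissible family of test functions on the state space, and both are quickly verified. Compact containment is automatic here: $E_\Delta$ is identified with $[0,\infty]$ and is therefore compact, every sample path of $B^{(n)}$ stays in $E_\Delta$, so for any prescribed error one may take the required compact set to be $E_\Delta$ itself, uniformly in $n$ and $t$. As for the test functions, each $f\in\cD(\sL)$ is extended to $E_\Delta$ by $f(\Delta):=0$, so $\cD(\sL)$ is a \emph{linear} subspace of $C(E_\Delta)$; by the density of $\cD(\sL)$ in $C_0(E)$ recalled above, and since $C_0(E)$ (viewed inside $C(E_\Delta)$) separates the points of $E_\Delta$ — distinct finite points are separated by some $f\in C_0(E)$, and a finite point $x$ is separated from the cemetery $\Delta$ by any $f\in C_0(E)$ with $f(x)\neq 0$ — the family $\cD(\sL)$ is a point-separating, addition-closed, dense subspace of $C(E_\Delta)$. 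Thus the hypothesis of the lemma, namely that $\{f(B^{(n)}):n\ge 1\}$ be tight in $D_\bR$ for every $f\in\cD(\sL)$, supplies exactly the remaining input of \cite[Chapter 3, Theorem 9.1]{EK86}, and the tightness of $\{B^{(n)}\}$ on $D_{E_\Delta}$ follows.

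For orientation I indicate the mechanism behind the cited theorem. Fix a countable subset $\{f_k\}_{k\ge 1}\subset\cD(\sL)$ that is dense in $C_0(E)$; it still separates the points of the compact space $E_\Delta$, so $E_\Delta\ni x\mapsto(f_k(x))_{k\ge 1}$ is a homeomorphism onto its image and $\rho(x,y):=\sum_{k\ge 1}2^{-k}\bigl(|f_k(x)-f_k(y)|\wedge 1\bigr)$ is a metric inducing the topology of $E_\Delta$; since $E_\Delta$ is compact, $\rho$ is uniformly equivalent to the original metric, and so the c\`adl\`ag modulus $w'(B^{(n)},\delta,T)$ may be computed using $\rho$. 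Truncating the series at a level $N$ for which $\sum_{k>N}2^{-k}$ is small reduces the control of the $\rho$-oscillations of $B^{(n)}$ to the \emph{simultaneous} control of the oscillations of the finitely many real-valued processes $f_1(B^{(n)}),\dots,f_N(B^{(n)})$. What makes this simultaneous control possible is that these processes are all deterministic functions of the one c\`adl\`ag process $B^{(n)}$ and that $\cD(\sL)$ is closed under addition: tightness of $\{(f_j+f_k)(B^{(n)})\}$ rules out $f_j(B^{(n)})$ and $f_k(B^{(n)})$ having jumps at distinct but asymptotically coalescing times, so a single $\delta$-sparse partition can be chosen to control all of them at once. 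This is precisely where the linear structure of the generator domain is used, and it is the step I would regard as the conceptual core of the argument; in the present write-up it is entirely absorbed into the citation.

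The only genuinely delicate point, beyond quoting \cite{EK86}, is the passage to the one-point compactification: one must be sure that the test functions in $\cD(\sL)$, all of which vanish at $\Delta$, nevertheless separate $\Delta$ from the interior points, and that the compact containment condition is posed on $E_\Delta$ and not on $E$ (on $E$ it may well fail — for instance for an FBM whose boundary jumps have unbounded size). Both have been checked above, so no further obstacle remains at the level of this lemma; the substantive task is the verification of the standing hypothesis that $\{f(B^{(n)})\}$ is tight in $D_\bR$ for every $f\in\cD(\sL)$, and this is what the subsequent sections are devoted to.
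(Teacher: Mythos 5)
Your overall strategy coincides with the paper's: verify compact containment (trivial since $E_\Delta$ is compact) and then feed the hypothesis of the lemma into \cite[Chapter 3, Theorem 9.1]{EK86}. But there is a genuine gap in the middle step: you assert that $\cD(\sL)$, extended by $f(\Delta):=0$, is a \emph{dense} subspace of $C(E_\Delta)$. It is not. Every element of $\cD(\sL)$ vanishes at $\Delta$, so the uniform closure of $\cD(\sL)$ in $C(E_\Delta)$ is the proper closed subspace $\{g\in C(E_\Delta): g(\Delta)=0\}$, i.e.\ $C_0(E)$; for instance the constant function $1_{E_\Delta}$ is at uniform distance $1$ from every $f\in\cD(\sL)$. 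Since the theorem you cite requires the test family $H$ to be dense in $\bar C(E_\Delta)=C(E_\Delta)$ (on a compact space, uniform convergence on compacts is uniform convergence), the hypothesis of the lemma does not, as written, ``supply exactly the remaining input.''

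The repair is short and is exactly what the paper does: take $H:=\{f+c\cdot 1_{E_\Delta}: f\in\cD(\sL),\ c\in\bR\}$, which \emph{is} dense in $C(E_\Delta)$ (for $g\in C(E_\Delta)$ the function $g-g(\Delta)$ lies in $C_0(E)$ and hence in the closure of $\cD(\sL)$), and then observe that tightness of $\{f(B^{(n)})+c: n\geq 1\}$ on $(D_\bR,\sD_\bR)$ is equivalent to tightness of $\{f(B^{(n)}): n\geq 1\}$ (e.g.\ by \cite[Chapter 3, Theorem~7.2]{EK86}), so the lemma's hypothesis still covers all of $H$. A side remark: the properties you do verify correctly --- that $\cD(\sL)$ separates the points of $E_\Delta$, including separating $\Delta$ from finite points, and is closed under addition --- are the hypotheses of a different tightness criterion (Jakubowski's), which would also yield the conclusion without adding constants; but that is not the theorem you cite, and your ``mechanism'' paragraph, which is organized around point-separation rather than density, is explicitly offered as orientation rather than as the proof.
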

\begin{proof}
    The compact containment condition \cite[Chapter 3, (9.1)]{EK86} is trivially satisfied in our setting, since $E_\Delta$ is compact. Moreover,  since $\sD(\sL)$ is dense in $C_0(E)$, it follows that
    \[
        H:=\{f+c\cdot 1_{E_\Delta}: f\in \cD(\sL), c\in \bR\}
    \]
    is dense in $C(E_\Delta)$. By \cite[Chapter 3, Theorem 9.1]{EK86}, it suffices to show that the family $\{f(B^{(n)})+c: n\geq 1\}$ is tight on $(D_\bR,\sD_\bR)$ for all $f\in \cD(\sL)$ and $c\in \bR$. Clearly, this tightness is equivalent to that of $\{f(B^{(n)}): n\geq 1\}$ by applying, e.g., \cite[Chapter 3, Theorem~7.2]{EK86}. The latter tightness is precisely the condition required in the statement of the lemma.
\end{proof}

Hence, our task reduces to establishing the tightness of $f(B^{(n)})$ on $(D_\bR,\sD_\bR)$ for each $f\in \cD(\sL)$, which can be accomplished by verifying Aldous' tightness criterion.
Recall that $(\sG^{(n)}_t)_{t\geq 0}$ given by \eqref{eq:35} is the natural filtration of $B^{(n)}$. A non-negative random variable $\tau$ is called a $\sG^{(n)}_t$-\emph{stopping time} provided that $\{\tau\leq t\}\in \sG^{(n)}_t$ for any $t\geq 0$. Note that if $\tau$ is a $\sG^{(n)}_t$-stopping time, then $\tau+\delta$ remains a $\sG^{(n)}_t$-stopping time for any constant $\delta\geq 0$. For any $T>0$, let $\Upsilon^{(n)}_T$ denote the family of all $\sG^{(n)}_t$-stopping times bounded by $T$. 
With these preparations, we can state the following well-known result.

\begin{theorem}[Aldous \cite{Al78}]\label{THM41}
Let $f\in \cD(\sL)$. 
If the following two conditions hold:
\begin{itemize}
    \item[(1)] For any $T\geq 0$,
    \begin{equation}\label{eq:41}
\lim_{\ell \rightarrow \infty}\limsup_{n\rightarrow \infty} \mathbb{P}(\sup_{0\leq t\leq T}|f(B^{(n)}_t)| > \ell)=0,
    \end{equation}
    \item[(2)] For any $\varepsilon>0$ and $T>0$, 
\begin{equation}\label{eq:42}
     \lim_{\delta \rightarrow 0} \limsup_{n \to \infty} \sup_{\tau\in \Upsilon^{(n)}_T} \mathbb{P}(|f(B^{(n)}_{\tau + \delta}) - f(B^{(n)}_{\tau})| > \varepsilon) = 0,
\end{equation}
\end{itemize}
then the family $\{f(B^{(n)}): n\geq 1\}$ is tight on $(D_\bR,\sD_\bR)$. 
\end{theorem}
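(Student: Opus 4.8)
The statement is the classical tightness criterion of Aldous \cite{Al78}, and the plan is to reduce it to the standard characterization of relative compactness in the Skorohod space $D_\bR([0,\infty))$ (see, e.g., \cite[Chapter~3]{EK86} or \cite{B99}). The first of the two ingredients required by that characterization is the \emph{compact containment condition}, and since $f(B^{(n)})$ is real-valued this is precisely hypothesis \eqref{eq:41}: for every $\eta>0$ and $T>0$ there is $\ell>0$ with $\sup_{n\ge 1}\bP\bigl(\sup_{0\le t\le T}|f(B^{(n)}_t)|>\ell\bigr)<\eta$, i.e.\ the paths of $f(B^{(n)})$ stay in the compact set $[-\ell,\ell]$ with probability close to $1$, uniformly in $n$.

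The second ingredient is a uniform bound on the c\`adl\`ag modulus of continuity $w'(\,\cdot\,,\delta,T)$ (see \cite[Chapter~3, \S6]{EK86}), namely
\[
\lim_{\delta\to 0}\ \limsup_{n\to\infty}\ \bP\bigl(w'\bigl(f(B^{(n)}_\cdot),\delta,T\bigr)>\varepsilon\bigr)=0\qquad\text{for all }\varepsilon>0,\ T>0,
\]
and this is exactly what the stopping-time estimate \eqref{eq:42} delivers, through Aldous's lemma. Concretely, fixing $\varepsilon>0$ one would run the standard chaining argument: put $\tau^{(n)}_0:=0$ and $\tau^{(n)}_{k+1}:=\inf\{t>\tau^{(n)}_k:|f(B^{(n)}_t)-f(B^{(n)}_{\tau^{(n)}_k})|\ge\varepsilon\}\wedge(T+1)$. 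Since each $\tau^{(n)}_k\wedge T$ belongs to $\Upsilon^{(n)}_T$, the estimate \eqref{eq:42} --- applied at these stopping times, together with its standard two-point refinement obtained by also stopping at $\tau^{(n)}_{k+1}\wedge(\tau^{(n)}_k+\delta)$ --- shows that, once $\delta$ is small, with probability close to $1$ uniformly in $n$ any two consecutive $\tau^{(n)}_k$ falling in $[0,T]$ are separated by more than $\delta$; on that event the partition they induce is $\delta$-sparse and each of its blocks carries oscillation at most $2\varepsilon$, so that $w'(f(B^{(n)}_\cdot),\delta,T)\le 2\varepsilon$. Combining the compact containment condition with this modulus bound and invoking the standard relative-compactness criterion of \cite[Chapter~3]{EK86} then gives relative compactness of $\{f(B^{(n)})\}$ in $D_\bR([0,\infty))$, which, this space being Polish, is equivalent to tightness.

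The only genuinely nontrivial point is the conversion of \eqref{eq:42} into the modulus bound above, i.e.\ Aldous's lemma; this is by now classical, and I would quote it directly from \cite{Al78} (see also \cite{B99}) rather than reprove it. The passage from the finite horizons $[0,T]$ to $[0,\infty)$ is routine, since relative compactness of the restrictions to $[0,T_k]$ along any sequence $T_k\uparrow\infty$ yields relative compactness on $[0,\infty)$. In short, Theorem~\ref{THM41} will be used here as a black box, and the substantive work, carried out in the following sections, is the verification of \eqref{eq:41} and \eqref{eq:42} for the functionals $f(B^{(n)})$, $f\in\cD(\sL)$.
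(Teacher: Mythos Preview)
Your proposal is correct and takes essentially the same approach as the paper: both treat Theorem~\ref{THM41} as a classical black box, the paper simply identifying \eqref{eq:41} and \eqref{eq:42} with the hypotheses of \cite[Theorem~16.10]{B99} and citing that result directly, while you sketch the underlying mechanism (compact containment plus Aldous's lemma converting the stopping-time estimate into a modulus-of-continuity bound) before ultimately deferring to the same literature.
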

\begin{proof}
Note that \eqref{eq:41} is exactly the condition \cite[(16.22)]{B99}. In addition, \eqref{eq:42} provides an alternative formulation of Condition $1^\circ$ on page 176 of \cite{B99}. By \cite[Theorem~16.10]{B99}, these two conditions together imply the tightness of $\{B^{(n)}: n\geq 1\}$. 
\end{proof}

Since $f\in \cD(\sL)$ is bounded, the first condition \eqref{eq:41} in Theorem~\ref{THM41} is automatically  satisfied. Therefore, to establish the tightness of $\{B^{(n)}: n\geq 1\}$, we need only to verify the condition \eqref{eq:42}, which will be addressed in the next subsection. 

\subsection{Proof of \eqref{eq:42}}\label{SEC42}

In this subsection, we take a function $f\in \cD(\sL)$ and aim to prove \eqref{eq:42}. 
Fix $\varepsilon, T, \delta>0$, $n$ and $\tau\in \Upsilon^{(n)}_T$. Our goal is to estimate
\begin{equation}\label{eq:414}
\mathbb{P}\left(\left|f\left(B^{(n)}_{\tau + \delta}\right) - f\left(B^{(n)}_{\tau}\right)\right| > \varepsilon\right). 
\end{equation}
We first record several useful facts for later use. Note that the norms $\|f\|_u$, $\|f'\|_u$ and $\|f''\|_u$ are all finite; see Remark~\ref{RM24}.  

\begin{lemma}
Let $f_n(j):=f(j/n)$ for all $j\in \bN$. Then 
\begin{equation}\label{eq:49}
\left|(P^{(n)}-I)f_n(i)\right|\leq \frac{\|f''\|_u}{2n^2},\quad \forall i>0,
 \end{equation}
 and
  \begin{equation}\label{eq:410}
|(P^{(n)}-I)f_n(0)|\leq \frac{p_1\|f\|_u+p_2\|f'\|_{u}+(2\|f\|_u+\|f'\|_{u})\int_{(0,\infty)}(1\wedge x)\,p_4(\dd x)}{n^2p_3},
 \end{equation}
 where $P^{(n)}$ is the transition matrix of $X^{(n)}$.
\end{lemma}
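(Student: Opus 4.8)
The plan is to verify the two estimates separately for the interior states $i\ge 1$ and for the boundary state $i=0$, in each case reducing everything to elementary Taylor expansions together with the integrability condition \eqref{eq:330}.

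For $i\ge 1$, I would first use \eqref{eq:22} to note that the $i$-th row of $P^{(n)}$ is the simple-random-walk row, so that
\[
(P^{(n)}-I)f_n(i)=\tfrac12\Bigl(f\bigl(\tfrac{i+1}{n}\bigr)+f\bigl(\tfrac{i-1}{n}\bigr)-2f\bigl(\tfrac{i}{n}\bigr)\Bigr)
\]
is a centered second difference of $f$ with step $1/n$. Since $f\in C^2((0,\infty))$ and, by Remark~\ref{RM24} together with $f''\in C_0(E)$ from \eqref{eq:37}, both $f'$ and $f''$ extend continuously to $0$ (so $f$ is of class $C^2$ on $[0,\infty)$ in the one-sided sense and $\|f''\|_u<\infty$), the second-difference form of Taylor's theorem applies even for $i=1$, where one endpoint of the difference lies at the boundary $0$: there exists $\xi$ between $(i-1)/n$ and $(i+1)/n$ with $f((i+1)/n)+f((i-1)/n)-2f(i/n)=n^{-2}f''(\xi)$. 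Taking absolute values yields \eqref{eq:49} immediately.

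For $i=0$, I would use the defining relation \eqref{eq:313} for $\fp^{(n)}_0$ and the convention $f_n(\Delta)=f(\Delta)=0$ to rewrite
\[
(P^{(n)}-I)f_n(0)=-\fp^{(n)}_\Delta f(0)+\sum_{j\ge 1}\fp^{(n)}_j\bigl(f(j/n)-f(0)\bigr),
\]
and then insert the explicit values from \eqref{eq:311}. The term $\fp^{(n)}_\Delta f(0)$ contributes at most $p_1\|f\|_u/(n^2p_3)$; the $j=1$ term equals $\tfrac{p_2}{np_3}\bigl(f(1/n)-f(0)\bigr)=\tfrac{p_2}{n^2p_3}f'(\theta)$ for some $\theta\in(0,1/n)$ by the mean value theorem, hence is bounded by $p_2\|f'\|_u/(n^2p_3)$. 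For the remaining sum, namely $\tfrac{1}{n^2p_3}\sum_{j=2}^{n^2-1}\bigl(f(j/n)-f(0)\bigr)p_4((j/n,(j+1)/n])$, I would bound $|f(j/n)-f(0)|\le(2\|f\|_u+\|f'\|_u)(1\wedge(j/n))$, distinguishing the case $j/n\le 1$, where $|f(j/n)-f(0)|\le(j/n)\|f'\|_u$, from the case $j/n>1$, where $|f(j/n)-f(0)|\le 2\|f\|_u$; then, using that $1\wedge(j/n)\le 1\wedge x$ for every $x\in(j/n,(j+1)/n]$, I obtain
\[
\sum_{j=2}^{n^2-1}|f(j/n)-f(0)|\,p_4((j/n,(j+1)/n])\le(2\|f\|_u+\|f'\|_u)\int_{(0,\infty)}(1\wedge x)\,p_4(\dd x),
\]
which is finite by \eqref{eq:330}. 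Collecting the three contributions gives \eqref{eq:410}.

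Everything here is routine; the only points deserving a little care are the justification that the second-difference Taylor estimate remains valid at $i=1$, where one endpoint of the difference is the boundary $0$ — this is exactly where Remark~\ref{RM24} and the inclusion $f''\in C_0(E)$ are used — and the elementary bound $|f(j/n)-f(0)|\le(2\|f\|_u+\|f'\|_u)(1\wedge(j/n))$ combined with the monotonicity $1\wedge(j/n)\le 1\wedge x$ on $(j/n,(j+1)/n]$, which is what turns the finite sum against the truncated jumping measure into the integral $\int_{(0,\infty)}(1\wedge x)\,p_4(\dd x)$. I do not anticipate any genuine obstacle.
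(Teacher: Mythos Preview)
Your proposal is correct and follows essentially the same route as the paper: the paper also splits into interior and boundary cases, uses the second-order Taylor expansion for $i\ge 1$ (it writes the remainder as $\tfrac{f''(\theta_1)+f''(\theta_2)}{4n^2}$ with two intermediate points rather than a single $\xi$, but this is cosmetic), and for $i=0$ uses exactly the decomposition into the killing term, the mean-value bound on $f(1/n)-f(0)$, and the estimate $|f(j/n)-f(0)|\le(2\|f\|_u+\|f'\|_u)(1\wedge j/n)$ for $j\ge 2$. Your explicit use of the monotonicity $1\wedge(j/n)\le 1\wedge x$ on $(j/n,(j+1)/n]$ just spells out a step the paper leaves implicit.
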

\begin{proof}
A straightforward computation shows that 
 \begin{equation}\label{eq:43}
    (P^{(n)}-I)f_n(i)=\frac{1}{2}\left(f_n(i+1)+f(i-1)-2f_n(i)\right),\quad i\geq 1,
 \end{equation}
 and 
 \begin{equation}\label{eq:44}
      (P^{(n)}-I)f_n(0)=-\fp^{(n)}_\Delta\cdot f_n(0)+\sum_{j\in \bN}\fp^{(n)}_j\cdot (f_n(j)-f_n(0)).
 \end{equation} 
    For $i>0$, it follows from \eqref{eq:43} and Taylor's theorem that
 \[
(P^{(n)}-I)f_n(i)= \frac{1}{2}\left(f\left(\frac{i+1}{n}\right)+f\left(\frac{i-1}{n}\right)-2f\left(\frac{i}{n}\right)\right)=\frac{f''(\theta_1)+f''(\theta_2)}{4n^2}
 \]
 for some $\theta_1\in ((i-1)/n,i/n)$ and $\theta_2\in (i/n, (i+1)/n)$. This establishes \eqref{eq:49}.  On the other hand, we have
 \begin{equation}\label{eq:519-2}
    f_n(1)-f_n(0)=f(1/n)-f(0)=\frac{f'(\theta_3)}{n}
 \end{equation}
 for some $\theta_3\in (0,1/n)$, and for $j\geq 2$,
 \begin{equation}\label{eq:520-2}
|f_n(j)-f_n(0)|=|f(j/n)-f(0)|\leq (2\|f\|_u+\|f'\|_{u})\cdot (1\wedge \frac{j}{n}). 
 \end{equation}
 Then \eqref{eq:410} can be easily deduced from \eqref{eq:44} and the expression of $(\fp^{(n)}_j)_{j\in \bN_\Delta}$. 
\end{proof}

Consider the process
\begin{equation}\label{eq:415}
    f\left(B^{(n)}_t\right)=f\left(\frac{1}{n}X^{(n)}_{\lfloor n^2t\rfloor}\right),\quad t\geq 0. 
\end{equation}
Note that the jumping measure of $X^{(n)}$ satisfies the assumptions of Lemma~\ref{LM21}. Consequently, Lemma~\ref{LM21} applies to each $X^{(n)}$ with $f_n(j)=f(j/n)$ for all $j\in \bN$, yielding that
\begin{equation}\label{eq:416}
\begin{aligned}
&M^{(n)}_0:=0,\\
&M^{(n)}_m:=f_n(X^{(n)}_m)-f_n(X^{(n)}_0)-\sum_{k=0}^{m-1} (P^{(n)}-I)f_n(X^{(n)}_k),\quad m\geq 1
\end{aligned}
\end{equation}
is a square-integrable martingale. For convenience, set
\[
\sigma_1:=\lfloor n^2\tau\rfloor,\quad \sigma_2:=\lfloor n^2(\tau+\delta)\rfloor.
\]
 Then, combining \eqref{eq:415} and \eqref{eq:416}, we obtain
\[
f\left(B^{(n)}_{\tau + \delta}\right) - f\left(B^{(n)}_{\tau}\right)=M^{(n)}_{\sigma_2}-M^{(n)}_{\sigma_1}+\sum_{k=\sigma_1}^{\sigma_2-1}(P^{(n)}-I)f_n(X^{(n)}_k). 
\]
We aim to estimate
\begin{equation}\label{eq:417}
\begin{aligned}
\bE&\left[\left(f\left(B^{(n)}_{\tau + \delta}\right) - f\left(B^{(n)}_{\tau}\right)\right)^2 \right] \\
&\leq 2\bE\left[\left(M^{(n)}_{\sigma_2}-M^{(n)}_{\sigma_1}\right)^2\right]+2\bE\left[\left(\sum_{k=\sigma_1}^{\sigma_2-1}(P^{(n)}-I)f_n(X^{(n)}_k)\right)^2\right] \\
&=:2J_1+2J_2. 
\end{aligned}\end{equation}
The second term $J_2$ can be estimated as follows. By using \eqref{eq:49} and \eqref{eq:410}, we have
\[
J_2\leq \left(\left(\frac{\|f''\|_u}{2n^2}\right)^2+\left(\frac{c_1}{n^2p_3}\right)^2\right)\cdot \bE\left[(\sigma_2-\sigma_1)^2\right],
\]
where $c_1$ denotes the numerator on the right hand side of \eqref{eq:410}.
Since $|\sigma_2-\sigma_1|=\lfloor n^2(\tau+\delta)\rfloor-\lfloor n^2\tau\rfloor\leq n^2\delta+1$, it follows that
\begin{equation}\label{eq:421}
    J_2\leq c_2\cdot \frac{(n^2\delta+1)^2}{n^4}, 
\end{equation}
where $c_2:=\|f''\|^2_u/4+c_1^2/p_3^2$ is a constant independent of $\varepsilon, T,\delta, n$ and $\tau$. 

To estimate the first term $J_1$, we need the following fact in order to apply the strong Markov property of $X^{(n)}$. 

\begin{lemma}\label{LM43}
For any $\tau\in \Upsilon^{(n)}_T$, the random variable $\lfloor n^2\tau\rfloor$ is a stopping time with respect to the natural filtration of $X^{(n)}$. 
\end{lemma}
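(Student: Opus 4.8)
The plan is to reduce the claim to the very definition of a stopping time and then read it off from the explicit identification \eqref{eq:35} of the natural filtration of $B^{(n)}$ with that of $X^{(n)}$. Thus it suffices to verify that $\{\lfloor n^2\tau\rfloor\le m\}\in\sG^{X^{(n)}}_m$ for every integer $m\ge 0$.

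First I would rewrite the event in terms of $\tau$ itself. Since $m$ is an integer, $\lfloor n^2\tau\rfloor\le m$ holds if and only if $n^2\tau<m+1$, that is, if and only if $\tau<(m+1)/n^2$; and this last event can be written as the increasing union
\[
\{\lfloor n^2\tau\rfloor\le m\}=\{\tau<(m+1)/n^2\}=\bigcup_{k\ge n^2}\Bigl\{\tau\le \tfrac{m+1}{n^2}-\tfrac1k\Bigr\},
\]
the equality holding because $\tau<(m+1)/n^2$ exactly when $\tau\le (m+1)/n^2-1/k$ for some sufficiently large $k$.

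Next I would use that $\tau\in\Upsilon^{(n)}_T$ is a $\sG^{(n)}_t$-stopping time, so that $\{\tau\le t\}\in\sG^{(n)}_t$ for every $t\ge 0$. For each $k\ge n^2$ the number $t_k:=(m+1)/n^2-1/k$ lies in the interval $[m/n^2,(m+1)/n^2)$, hence by \eqref{eq:35} we have $\sG^{(n)}_{t_k}=\sG^{X^{(n)}}_m$, and therefore $\{\tau\le t_k\}\in\sG^{X^{(n)}}_m$. Taking the countable union over $k\ge n^2$ keeps the event inside $\sG^{X^{(n)}}_m$, which is precisely $\{\lfloor n^2\tau\rfloor\le m\}\in\sG^{X^{(n)}}_m$; since $m$ was arbitrary, this proves that $\lfloor n^2\tau\rfloor$ is a $(\sG^{X^{(n)}}_m)$-stopping time.

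There is no genuine obstacle here; the argument is pure bookkeeping. The only subtlety worth flagging is that $(\sG^{(n)}_t)_{t\ge 0}$ is merely piecewise constant and not right-continuous, so one must approximate the right endpoint $(m+1)/n^2$ \emph{strictly from below} through the sequence $t_k$ in order to stay on the interval where the filtration coincides with $\sG^{X^{(n)}}_m$; working directly with $\{\tau\le (m+1)/n^2\}$ would land the event only in $\sG^{X^{(n)}}_{m+1}$.
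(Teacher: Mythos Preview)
Your proof is correct and takes essentially the same approach as the paper: both rewrite the relevant event in terms of $\tau$, approximate the right endpoint $(m+1)/n^2$ strictly from below, and then invoke the identification \eqref{eq:35} to land in $\sG^{X^{(n)}}_m$. The only cosmetic difference is that the paper verifies $\{\lfloor n^2\tau\rfloor=j\}\in\sG^{X^{(n)}}_j$ for each $j$, whereas you work directly with $\{\lfloor n^2\tau\rfloor\le m\}$; your version is marginally cleaner.
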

\begin{proof}
Let $(\sG^{X^{(n)}}_m)_{m\geq 0}$ denote the natural filtration of $X^{(n)}$. Recall that the natural filtration $(\sG^{(n)}_t)_{t\geq 0}$ of $B^{(n)}$ satisfies \eqref{eq:35}, with $(\sG^{X^{(n)}}_m)_{m\geq 0}$ in place of $(\sG^{X}_m)_{m\geq 0}$. For any $j\in \bN$, we have
\[
\left\{\lfloor n^2\tau\rfloor=j\right\}=\left\{j/n^2\leq \tau<(j+1)/n^2 \right\}=\bigcap_{s<\frac{j}{n^2}}\bigcup_{t<\frac{j+1}{n^2}}\{s<\tau\leq t\},
\]
where $s,t$ range over all rational numbers satisfying the requirements. It follows from \eqref{eq:35} that $\left\{\lfloor n^2\tau\rfloor=j\right\}\in \sG^{X^{(n)}}_j$. Therefore, $\lfloor n^2\tau\rfloor$ is a stopping time with respect to $\sG^{X^{(n)}}_m$. 
\end{proof}

Before proceeding, we clarify the notation needed for applying the strong Markov property. Under the setup of Theorem \ref{THM31}, the BRW $X^{(n)}$ is defined on the probability space $(\Omega,\sG,\mathbb P)$ and satisfies
\[
    \mathbb P\left(X^{(n)}_0=\lfloor n x_0\rfloor\right)=1.
\]
When invoking the Markov or strong Markov property, we shall write $\mathbb P_{\lfloor n x_0\rfloor}$ in place of $\mathbb P$ to emphasize that the BRW starts from the state $\lfloor n x_0\rfloor$. (To distinguish this from the law of the FBM, we use subscripts for the initial state of the BRW, whereas superscripts are reserved for the FBM.) More generally, for any other $i\in\mathbb N$, we may consider the BRW starting from $i$ with the same transition matrix $P^{(n)}$; its law is denoted by $\mathbb P_i$. Since the strong Markov property is used only in the estimate of $J_1$, we will continue to write  $\mathbb P_{\lfloor n x_0\rfloor}$ as $\bP$ elsewhere, in order to keep the notation concise.
We also introduce the shift operators 
\[
    \theta_k:\Omega\to\Omega, \quad k\in\mathbb N,
\] which satisfy
$X^{(n)}_m\circ \theta_k = X^{(n)}_{m+k}$ for all $m\ge 0$.
Strictly speaking, the shift operators depend on $n$; however, this dependence will be suppressed for simplicity.

We now continue to estimate the term $J_1$ appearing in \eqref{eq:417}. Observe that $\lfloor n^2\delta\rfloor\leq \sigma_2-\sigma_1\leq \lfloor n^2\delta\rfloor+1$. With convention $k_\delta:=\lfloor n^2\delta\rfloor$ for convenience, we have
\[
|M^{(n)}_{\sigma_2}-M^{(n)}_{\sigma_1}|\leq \left|\sum_{j=0}^{k_\delta}\left(M^{(n)}_{\sigma_1+j+1}-M^{(n)}_{\sigma_1+j}\right)\right|+\left|M^{(n)}_{\sigma_2+1}-M^{(n)}_{\sigma_2}\right|.
\]
This implies that
\begin{equation}\label{eq:419-2}
\begin{aligned}
J_1&\leq 2\bE\left[\left(\sum_{j=0}^{k_\delta}\left(M^{(n)}_{\sigma_1+j+1}-M^{(n)}_{\sigma_1+j}\right)\right)^2\right]+2\bE\left[\left(M^{(n)}_{\sigma_2+1}-M^{(n)}_{\sigma_2}\right)^2\right]\\
&=:2J_{11}+2J_{12}.
\end{aligned}\end{equation}

Regarding the simpler term $J_{12}$, we note that $\tau+\delta\in \Upsilon^{(n)}_{T+\delta}$. By Lemma~\ref{LM43}, $\sigma_2=\lfloor n^2(\tau+\delta)\rfloor$ is a stopping time with respect to the natural filtration of $X^{(n)}$. It then follows from \eqref{eq:416} and the strong Markov property of $X^{n}$ that
\begin{equation}\label{eq:420}
\begin{aligned}
J_{12}&=\bE\left[\left(f_n\left(X^{(n)}_{\sigma_2+1}\right)-f_n\left(X^{(n)}_{\sigma_2}\right)-(P^{(n)}-I)f_n(X^{(n)}_{\sigma_2})\right)^2\right] \\
&=\bE\left[\left(f_n\left(X^{(n)}_{1}\right)-f_n\left(X^{(n)}_{0}\right)-(P^{(n)}-I)f_n(X^{(n)}_{0})\right)^2\circ \theta_{\sigma_2}\right]\\
&=\bE \left[\bE_{X^{(n)}_{\sigma_2}}\left[\left(f_n\left(X^{(n)}_{1}\right)-f_n\left(X^{(n)}_{0}\right)-(P^{(n)}-I)f_n(X^{(n)}_{0})\right)^2\right]\right]. 
\end{aligned}\end{equation}
For convenience, define for all $i\in \bN$, 
\begin{equation}\label{eq:421-2}
\begin{aligned}
g(i):=\bE_{i}&\left[\left(f_n\left(X^{(n)}_{1}\right)-f_n\left(X^{(n)}_{0}\right)-(P^{(n)}-I)f_n(X^{(n)}_{0})\right)^2\right] \\
&\leq 2\bE_{i}\left[\left(f_n\left(X^{(n)}_{1}\right)-f_n\left(X^{(n)}_{0}\right)\right)^2+\left((P^{(n)}-I)f_n(X^{(n)}_{0})\right)^2\right].
\end{aligned}\end{equation}
When $i\neq 0$, it holds that
\[
g(i)\leq (f_n(i+1)-f_n(i))^2+(f_n(i-1)-f_n(i))^2+2\left((P^{(n)}-I)f_n(i)\right)^2.
\]
By Taylor's theorem together with \eqref{eq:49}, we obtain
\begin{equation}\label{eq:418}
g(i)\leq \frac{2\|f'\|^2_u}{n^2}+\frac{\|f''\|_u^2}{2n^4},\quad \forall i\neq 0. 
\end{equation}
For $i=0$, we have
\[
g(0)\leq 2\sum_{j\in \bN}(f_n(j)-f_n(0))^2\fp^{(n)}_j+2\left((P^{(n)}-I)f_n(0)\right)^2. 
\]
Using \eqref{eq:519-2}, \eqref{eq:520-2}, the expression of $\fp^{(n)}_j$ and \eqref{eq:410},  we obtain
\begin{equation}\label{eq:419}
\begin{aligned}
g(0)&\leq \frac{2\|f'\|_{u}^2 \cdot p_2}{n^3p_3}+\frac{2(\|f'\|_u+2\|f\|_u)^2}{n^2p_3}\sum_{j=2}^{n^2-1}\left(1\wedge \frac{j}{n}\right)^2p_4\left((j/n,(j+1)/n]\right)+\frac{2c^2_1}{n^4p_3^2} \\
&\leq \frac{2\|f'\|_u^2 \cdot p_2}{n^3p_3}+\frac{2(\|f'\|_u+2\|f\|_u)^2\cdot \int_{(0,\infty)}(1\wedge x)\,p_4(\dd x)}{n^2p_3}+\frac{2c_1^2}{n^4p_3^2},
\end{aligned}\end{equation}
where $c_1$ is the constant appearing in the numerator on the right hand side of \eqref{eq:410}. Applying the estimates \eqref{eq:418} and \eqref{eq:419} to \eqref{eq:420} yields 
\begin{equation}\label{eq:422}
    J_{12}=\bE g(X^{(n)}_{\sigma_1})\leq c_3\cdot \left(\frac{1}{n^2}+\frac{1}{n^3}+\frac{1}{n^4}\right),
\end{equation}
where 
\[
        c_3:=\max\left\{2\|f'\|^2_u,\|f''\|_u^2,\frac{2\|f'\|_u^2  \cdot p_2}{p_3},\frac{2(\|f'\|_u+2\|f\|_u)^2 \int_{(0,\infty)}(1\wedge x)\,p_4(\dd x)}{p_3},\frac{2c_1^2}{p_3^2}\right\}
\]
is a constant independent of $n$. 

We now turn to the estimate of the term $J_{11}$. By an argument analogous to that leading to \eqref{eq:420}, using the strong Markov property at time $\sigma_1$, we obtain
\[
J_{11}=\bE\left[\bE_{X^{(n)}_{\sigma_1}}\left[\left(\sum_{j=0}^{k_\delta}\left(M^{(n)}_{j+1}-M^{(n)}_{j}\right)\right)^2\right]\right].
\]
Define
\[
h(i):=\bE_{i}\left[\left(\sum_{j=0}^{k_\delta}\left(M^{(n)}_{j+1}-M^{(n)}_{j}\right)\right)^2\right],\quad \forall i\in \bN. 
\]
The key observation is that, for each $i\in\bN$, the process $M^{(n)}$ remains a squre-integrable martingale under the law $\bP_i$. Consequently, using the martingale property, we have
\begin{equation}\label{eq:424-2}
h(i)=\bE_{i}\left[\sum_{j=0}^{k_\delta}\left(M^{(n)}_{j+1}-M^{(n)}_{j}\right)^2\right]=\sum_{j=0}^{k_\delta}\bE_i\left[\left(M^{(n)}_{j+1}-M^{(n)}_{j}\right)^2\right].
\end{equation}
For each $0\leq j\leq k_\delta$, an argument similar to that used in the estimate of \eqref{eq:420}, with $j$ in place of $\sigma_2$, yields 
\[
    \bE_i\left[\left(M^{(n)}_{j+1}-M^{(n)}_{j}\right)^2\right]\leq c_3\cdot \left(\frac{1}{n^2}+\frac{1}{n^3}+\frac{1}{n^4}\right). 
\]
It follows that
\[
    h(i)\leq c_3(1+k_\delta)\cdot \left(\frac{1}{n^2}+\frac{1}{n^3}+\frac{1}{n^4}\right),\quad \forall i\in \bN,
\]
which further implies
\begin{equation}\label{eq:423}
    J_{11}=\bE \left[h(X^{(n)}_{\sigma_1})\right]\leq c_3(1+k_\delta)\cdot \left(\frac{1}{n^2}+\frac{1}{n^3}+\frac{1}{n^4}\right),
\end{equation}
where $c_3$ is a constant independent of $\varepsilon, T,\delta, n$ and $\tau$. 

Finally, by combining \eqref{eq:414}, \eqref{eq:417}, \eqref{eq:421}, \eqref{eq:422} and \eqref{eq:423}, and noting that $k_\delta=\lfloor n^2\delta\rfloor\leq n^2\delta$, we can conclude
\[
\bP(|B^{(n)}_{\tau + \delta} - B^{(n)}_{\tau}| > \varepsilon)\leq \frac{2}{\varepsilon^2}\left[c_2\cdot \frac{(n^2\delta+1)^2}{n^4}+c_3(2+n^2\delta)\cdot \left(\frac{1}{n^2}+\frac{1}{n^3}+\frac{1}{n^4}\right)\right],
\]
where neither $c_2$ nor $c_3$ depends on $\varepsilon, T,\delta, n$ or $\tau$. This readily leads to \eqref{eq:42}. Eventually, the tightness of $\{B^{(n)}:n\geq 1\}$ is established.

\section{Proof of Theorem~\ref{THM31}: Weak convergence}\label{SEC5}

Recall that the rescaled BRW $B^{(n)}$ described in Theorem~\ref{THM31} can be realized as a law $\bQ^{(n)}$ on the Skorohod space as in \eqref{eq:34}, and that the FBM $B$ starting from $x_0$ in Theorem~\ref{THM31} can likewise be  realized as a law $\bQ^{x_0}$ on the same Skorohod space. The generator $\sL$ with domain $\cD(\sL)$ is clarified in Theorem~\ref{THM23}. 
Having established the tightness of $\bQ^{(n)}$, we now appeal to the uniqueness of martingale solutions associated with the Feller process to conclude that $\bQ^{(n)}$ converges weakly to $\bQ^{x_0}$. This result is formulated precisely in the following lemma.

\begin{lemma}\label{LM51}
Let $\sQ$ denote the collection of all probability measures $\bQ$ on $(D_{E_\Delta}, \sD)$ such that there exists a subsequence \(\{\bQ^{(n_k)}\}\) of $\{\bQ^{(n)}\}$ converging weakly to \(\bQ\).
Suppose that every $\bQ\in\sQ$ satisfies 
\begin{equation}\label{eq:51}
    \bQ(\{w\in D_{E_\Delta}: w(0)=x_0\})=1,
\end{equation}
and that, for every $f\in\cD(\sL)$,
\begin{equation}\label{eq:52}
N_t^f := f(w(t)) - f(w(0)) - \int_0^t \sL f(w(s))\,\mathrm d s,\qquad t\ge 0,
\end{equation}
is a martingale on $(D_{E_\Delta}, \sD, \bQ)$ with respect to the filtration $(\sD_t)_{t\ge 0}$. Then $\sQ$ contains exactly one element, namely $\bQ^{x_0}$. In particular, $\bQ^{(n)} \Rightarrow \bQ^{x_0}$.
\end{lemma}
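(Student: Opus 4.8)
The plan is to combine the tightness established in Section~\ref{Se4} with the well-posedness of the martingale problem for the Feller generator $(\sL,\cD(\sL))$ of Theorem~\ref{THM23}, and then to identify the weak limit by a routine subsequence argument. First, since $E_\Delta$ is a compact metric space, $D_{E_\Delta}$ is a Polish space, so the tightness of $\{\bQ^{(n)}:n\geq 1\}$ proved in Section~\ref{Se4} implies, via Prokhorov's theorem, that $\{\bQ^{(n)}\}$ is relatively compact for the topology of weak convergence; in particular $\sQ\neq\emptyset$. It therefore suffices to prove $\sQ\subset\{\bQ^{x_0}\}$: granting this, the relative compactness of $\{\bQ^{(n)}\}$ together with the uniqueness of its subsequential limits forces $\bQ^{(n)}\Rightarrow\bQ^{x_0}$. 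Indeed, were this false, there would exist a bounded continuous $F$ on $D_{E_\Delta}$, a number $\varepsilon>0$, and a subsequence along which $\bigl|\int F\,\dd\bQ^{(n_k)}-\int F\,\dd\bQ^{x_0}\bigr|\geq\varepsilon$; by relative compactness a further subsequence of $\{\bQ^{(n_k)}\}$ would converge weakly to some $\bQ\in\sQ=\{\bQ^{x_0}\}$, and hence $\int F\,\dd\bQ^{(n_{k_j})}\to\int F\,\dd\bQ^{x_0}$ along that sub-subsequence, a contradiction.

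To establish $\sQ\subset\{\bQ^{x_0}\}$ I would invoke the uniqueness of the martingale problem associated with the Feller semigroup of $B$. By Theorem~\ref{THM23}, $B$ is a Feller process on $E$ with infinitesimal generator $(\sL,\cD(\sL))$; extending each $f\in\cD(\sL)\subset C_0(E)$ to $C(E_\Delta)$ by $f(\Delta)=0$ and treating $\Delta$ as an absorbing state realizes $B$ as a conservative Feller process on the compact space $E_\Delta$, for whose generator $\cD(\sL)$ is a core. Moreover, adjoining the constants to the domain, with the convention $\sL\mathbf 1=0$, leaves unchanged the collection of solutions of the associated martingale problem, since the martingale property in \eqref{eq:52} is unaffected by subtracting a deterministic process. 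Hence, by the general well-posedness theory for martingale problems attached to Feller semigroups (see, e.g., \cite[Chapter~4]{EK86}; this is exactly the route followed in \cite{ABP25}), for each $x\in E$ there is precisely one probability measure on $(D_{E_\Delta},\sD)$ under which $w(0)=x$ holds a.s.\ and $N^f_t$ is a $(\sD_t)$-martingale for every $f\in\cD(\sL)$, and that measure is the law $\bQ^x$ of the FBM started at $x$. Applying this with $x=x_0$ to an arbitrary $\bQ\in\sQ$---which by hypothesis satisfies \eqref{eq:51} and makes $N^f_t$ a $(\sD_t)$-martingale for all $f\in\cD(\sL)$---yields $\bQ=\bQ^{x_0}$.

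Combining the two steps gives $\sQ=\{\bQ^{x_0}\}$ and therefore $\bQ^{(n)}\Rightarrow\bQ^{x_0}$. The one point demanding care is the precise invocation of the well-posedness theorem on the compactified state space $E_\Delta$: one must check that passing to $E_\Delta$ with $\Delta$ absorbing, and adjoining constants to $\cD(\sL)$, leave the class of martingale solutions intact, so that the abstract uniqueness statement applies verbatim to the measures in $\sQ$. By contrast, the substantial inputs to this lemma---the tightness of $\{\bQ^{(n)}\}$ (Section~\ref{Se4}) and the verification that every $\bQ\in\sQ$ satisfies \eqref{eq:51}--\eqref{eq:52}---are proved separately and constitute the genuine crux of the convergence proof.
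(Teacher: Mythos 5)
Your argument is correct and is essentially the paper's: the paper disposes of this lemma by citing \cite[Theorem~3.33]{Lig10}, which packages exactly the combination you spell out — relative compactness of $\{\bQ^{(n)}\}$ from tightness, well-posedness of the martingale problem for the Feller generator $(\sL,\cD(\sL))$ forcing every subsequential limit to equal $\bQ^{x_0}$, and the standard subsequence argument to upgrade this to convergence of the full sequence. Your care about compactifying to $E_\Delta$ with $\Delta$ absorbing and adjoining constants is the right technical point to flag, and it goes through as you describe.
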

\begin{proof}
See, e.g., \cite[Theorem 3.33]{Lig10}. 
\end{proof}

Fix $\bQ\in \sQ$, and assume, without loss of generality, that $\bQ^{(n)}\Rightarrow \bQ$. The objective of this section is to show that $\bQ$ satisfies the two conditions stated in Lemma~\ref{LM51}. 

The first condition \eqref{eq:51} is straightforward to verify. Indeed, since the projection map $\pi_0$ is continuous on $D_{E_\Delta}$ (see \cite[Theorem 12.5]{B99}), the continuous mapping theorem (see \cite[Theorem 2.7]{B99}) yields that
\[
\bQ^{(n)}\circ \pi_0^{-1}=\bP\circ \left(B^{(n)}_0\right)^{-1}
=\delta_{\frac{\lfloor nx_0\rfloor}{n}}
\]
converges weakly to $\bQ\circ \pi_0^{-1}$. As a result, $\bQ\circ \pi_0^{-1}=\delta_{x_0}$, and \eqref{eq:51} follows immediately.

From now on, fix $f\in \cD(\sL)$, and we turn to prove that \eqref{eq:52} is a martingale on $(D_{E_\Delta}, \sD, \bQ)$. To proceed, we first introduce some notations. Define
\begin{equation}\label{eq:58}
\left(\sL^{(n)}f\right)\left(x\right):=n^2\cdot (P^{(n)}-I)f_n(i),\quad \forall x\in [i/n, (i+1)/n),\; i\in \bN,
\end{equation}
where $f_n(i):=f(i/n)$ for $i\in \bN$. 
For each $w\in D_{E_\Delta}$, set
\begin{equation}\label{eq:54}
N^{(n)}_t(w):=f(w(t))-f(w(0))-\int_0^{\frac{\lfloor n^2t\rfloor}{n^2}}\left(\sL^{(n)}f\right)(w(s))\dd s,\quad t\geq 0. 
\end{equation}
Then $(N^{(n)}_t)_{t\geq 0}$ forms a $\sD_t$-adapted c\`adl\`ag real-valued process defined on the probability space $(D_{E_\Delta},\sD,\bQ^{(n)})$, which is equivalent to the real-valued c\`adl\`ag process
\begin{equation}\label{eq:53}
f(B^{(n)}_t)-f(B^{(n)}_0)-\int_0^{\frac{\lfloor n^2t\rfloor}{n^2}}\left(\sL^{(n)}f\right)(B^{(n)}_s)\dd s,\quad t\geq 0,
\end{equation}
on the probability space $(\Omega,\sG,\bP)$. 
 
The key step is to establish the following weak convergence.

\begin{proposition}\label{PRO52}
Let $f\in \cD(\sL)$, and let $N^{(n)}$ be the c\`adl\`ag process on the probability space $(D_{E_\Delta},\sD,\bQ^{(n)})$ defined as \eqref{eq:54}. Further let $N^f$ be the c\`adl\`ag process on $(D_{E_\Delta},\sD,\bQ)$ defined as \eqref{eq:52}. Assume that $\bQ^{(n)}\Rightarrow \bQ$ as $n\rightarrow \infty$.  Then $N^{(n)}\Rightarrow N^f$ as $n\rightarrow \infty$.
\end{proposition}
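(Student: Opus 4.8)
The plan is to express both $N^{(n)}$ and $N^f$ as images of the canonical path under explicit Borel maps, to show that these maps converge \emph{uniformly over all paths}, and then to finish with the continuous mapping theorem and a ``converging together'' argument. First I would extend $\sL^{(n)}f$ and $\sL f$ to $E_\Delta$ by setting their value at $\Delta$ equal to $0$ (consistent with $f(\Delta)=0$ and $f''\in C_0(E)$), and define $\Phi^{(n)},\Phi:D_{E_\Delta}\to D_\bR$ by
\[
\Phi^{(n)}(w)(t):=f(w(t))-f(w(0))-\int_0^{\lfloor n^2t\rfloor/n^2}(\sL^{(n)}f)(w(s))\,\dd s,\qquad \Phi(w)(t):=f(w(t))-f(w(0))-\int_0^t\sL f(w(s))\,\dd s .
\]
Both maps are $\sD/\sD_\bR$-measurable and produce genuine c\`adl\`ag paths, and by \eqref{eq:54} and \eqref{eq:52} the law of $N^{(n)}$ on $D_\bR$ equals $\bQ^{(n)}\circ(\Phi^{(n)})^{-1}$ while that of $N^f$ equals $\bQ\circ\Phi^{-1}$. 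So the task reduces to proving $\bQ^{(n)}\circ(\Phi^{(n)})^{-1}\Rightarrow\bQ\circ\Phi^{-1}$.

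The crucial analytic input, which I expect to be the main obstacle, is the uniform convergence of the discrete generators, $\|\sL^{(n)}f-\sL f\|_u\to0$. In the interior this is easy: for $x$ with $i:=\lfloor nx\rfloor\ge 1$, Taylor's theorem applied to \eqref{eq:43} gives $\sL^{(n)}f(x)=\tfrac14\bigl(f''(\theta_1)+f''(\theta_2)\bigr)$ with $|\theta_j-x|<2/n$, hence $|\sL^{(n)}f(x)-\tfrac12f''(x)|\le\tfrac12\,\omega_{f''}(2/n)$, where $\omega_{f''}$ is the modulus of continuity of $f''\in C_0(E)$. The delicate part is at $0$: from \eqref{eq:44} and \eqref{eq:311},
\[
\sL^{(n)}f(0)=\frac1{p_3}\Bigl[-p_1f(0)+p_2\,n\bigl(f(1/n)-f(0)\bigr)+\sum_{j=2}^{n^2-1}\bigl(f(j/n)-f(0)\bigr)\,p_4\bigl((j/n,(j+1)/n]\bigr)\Bigr],
\]
and I would argue that $p_2\,n(f(1/n)-f(0))\to p_2f'(0)$ while the sum converges to $\int_{(0,\infty)}(f(x)-f(0))\,p_4(\dd x)$; here the linear bound $|f(x)-f(0)|\le(2\|f\|_u+\|f'\|_u)(1\wedge x)$ from \eqref{eq:520-2} together with $\int_{(0,\infty)}(1\wedge x)\,p_4(\dd x)<\infty$ (cf.\ \eqref{eq:330}) dominates the Riemann sum, with the contribution of $\{x<\delta\}$ (controlled by $\int_{(0,\delta]}x\,p_4(\dd x)$), the finitely many terms with $x\in[\delta,n]$, and the truncated tail $\{x>n\}$ (controlled by $p_4((n,\infty))$) each handled separately. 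The boundary relation in \eqref{eq:37}, rewritten as $\tfrac{p_3}2f''(0)=p_2f'(0)-p_1f(0)+\int_{(0,\infty)}(f(x)-f(0))\,p_4(\dd x)$, then identifies the limit as $\sL f(0)=\tfrac12f''(0)$; since $\sL^{(n)}f$ is constant on $[0,1/n)$ and $\sL f$ is continuous, gluing these estimates yields $\|\sL^{(n)}f-\sL f\|_u\to0$, and in particular $\sup_n\|\sL^{(n)}f\|_u<\infty$ (which also follows at once from \eqref{eq:49} and \eqref{eq:410}). This is exactly the step that uses the specific form of \eqref{eq:311}: the jumping measure is engineered so that the discrete boundary generator converges to the continuous boundary generator prescribed by \eqref{eq:37}.

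Granting this, the remaining steps are soft. Since $|t-\lfloor n^2t\rfloor/n^2|\le n^{-2}$ and $\sup_n\|\sL^{(n)}f\|_u<\infty$, one gets, for every $w\in D_{E_\Delta}$ and $T>0$,
\[
\sup_{t\le T}\bigl|\Phi^{(n)}(w)(t)-\Phi(w)(t)\bigr|\le T\,\|\sL^{(n)}f-\sL f\|_u+\frac{\|\sL^{(n)}f\|_u}{n^2}=:\varepsilon_n(T),
\]
a bound that is \emph{independent of $w$} and tends to $0$ for each $T$; taking $\lambda=\mathrm{id}$ in \eqref{eq:513} then gives $\sup_{w}d(\Phi^{(n)}(w),\Phi(w))\le\eta_n:=\int_0^\infty e^{-s}\bigl(\varepsilon_n(s)\wedge1\bigr)\,\dd s\to0$. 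Next I would check that $\Phi$ is continuous on all of $D_{E_\Delta}$: the map $w\mapsto f\circ w$ is continuous $D_{E_\Delta}\to D_\bR$ because $f$ is uniformly continuous on the compact $E_\Delta$, and $w\mapsto\int_0^{\cdot}\sL f(w(s))\,\dd s$ is continuous $D_{E_\Delta}\to C_\bR$ because Skorohod convergence $w_k\to w$ forces $w_k(s)\to w(s)$ for Lebesgue-a.e.\ $s$, so dominated convergence (with $\|\sL f\|_u<\infty$) gives uniform convergence of the integrals on compact time intervals. Hence the continuous mapping theorem yields $\bQ^{(n)}\circ\Phi^{-1}\Rightarrow\bQ\circ\Phi^{-1}$; and since $\bigl|\int F(\Phi^{(n)}(w))\,\bQ^{(n)}(\dd w)-\int F(\Phi(w))\,\bQ^{(n)}(\dd w)\bigr|\le\mathrm{Lip}(F)\,\eta_n\to0$ for every bounded Lipschitz $F$ on $D_\bR$, and bounded Lipschitz functions are convergence-determining on the Polish space $D_\bR$, a converging-together argument upgrades this to $\bQ^{(n)}\circ(\Phi^{(n)})^{-1}\Rightarrow\bQ\circ\Phi^{-1}$, i.e.\ $N^{(n)}\Rightarrow N^f$. (Alternatively one may run this last step through Skorohod's representation theorem; and the same reasoning applied to $w\mapsto(w,\Phi^{(n)}(w))$ also yields the joint convergence of the pair consisting of the canonical path and $N^{(n)}$, which is what is actually needed in the subsequent verification of the martingale problem.)

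In summary, the only genuinely substantive step is the boundary estimate $\sL^{(n)}f(0)\to\sL f(0)$ of the second paragraph — in particular the Riemann-sum/dominated-convergence analysis of $\sum_j\bigl(f(j/n)-f(0)\bigr)\,p_4((j/n,(j+1)/n])$ when $|p_4|=\infty$, where the mass of $p_4$ near the origin has to be controlled uniformly in $n$ via $\int(1\wedge x)\,p_4(\dd x)<\infty$ and the linear bound on $f(x)-f(0)$; everything downstream (measurability, the uniform-in-$w$ closeness of $\Phi^{(n)}$ to $\Phi$, continuity of $\Phi$, and the converging-together conclusion) is routine.
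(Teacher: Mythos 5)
Your proposal is correct and follows essentially the same route as the paper: the substantive step in both is the convergence of the discrete boundary generator $\sL^{(n)}f(0)\to\sL f(0)$ via the boundary condition in \eqref{eq:37} together with dominated convergence using the bound $|f(x)-f(0)|\le(2\|f\|_u+\|f'\|_u)(1\wedge x)$ and \eqref{eq:330}, combined with continuity of the limit map $w\mapsto f(w(\cdot))-f(w(0))-\int_0^{\cdot}\sL f(w(s))\,\dd s$ and a converging-together argument (the paper splits $M^{(n)}=M^{(n),1}+M^{(n),2}$ and cites \cite[Corollary~3.3 of Chapter~3]{EK86}, which is the same content as your bounded-Lipschitz argument). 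The only cosmetic difference is that you phrase the error estimate as a bound uniform over all paths $w$ rather than almost surely along $B^{(n)}$, which is a harmless strengthening since the generator estimates are already uniform in the state variable.
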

\begin{proof}
Let $(M^{(n)}_t)_{t\geq 0}$ denote the process defined as \eqref{eq:53}. As explained in Remark~\ref{RM31}, it suffices to establish $M^{(n)}\Rightarrow N^f$. To this end, we decompose $M^{(n)}$ as $M^{(n)}_t=M^{(n),1}_t+M^{(n),2}_t$, where
\[
M^{(n),1}_t:=f(B^{(n)}_t)-f(B^{(n)}_0)-\int_0^{t}\left(\sL f\right)(B^{(n)}_s)\dd s
\]
and
\[
  M^{(n),2}_t:=  \int_0^{t}\left(\sL f\right)(B^{(n)}_s)\dd s-\int_0^{\frac{\lfloor n^2t\rfloor}{n^2}}\left(\sL^{(n)}f\right)(B^{(n)}_s)\dd s
\]
are both c\`adl\`ag real-valued processes. 

We first show that $M^{(n),1}\Rightarrow N^f$.
Consider the following maps:
\[
\begin{aligned}
&\phi_1: D_{E_\Delta}\rightarrow D_\bR,\quad w\mapsto f(w(\cdot))-f(w(0)),\\
&\phi_2: D_{E_\Delta}\rightarrow C_\bR,\quad w\mapsto \int_0^\cdot \sL f(w(s))\dd s,
\end{aligned}\]
and
\[
\begin{aligned}
    &\Phi: D_{E_\Delta}\rightarrow D_\bR\times C_\bR,\quad w\mapsto (\phi_1(w), \phi_2(w)), \\
    &\Psi: D_\bR\times C_\bR\rightarrow D_\bR,\quad (w_1,w_2)\mapsto w_1-w_2,
\end{aligned}\]
where $(w_1-w_2)(t):=w_1(t)-w_2(t)$ for $t\geq 0$. 
We claim that all these maps are continuous, and hence so is
\begin{equation}\label{eq:57}
    \Psi\circ \Phi:D_{E_\Delta}\rightarrow D_\bR. 
\end{equation}
The continuity of $\Psi$ has been stated in, e.g., \cite[VI. Proposition 1.23]{JS03}. We next verify that continuity of $\phi_2$. Let $w_k\rightarrow w$ in $D_{E_\Delta}$ with respect to the Skorohod topology, and fix $T>0$. It is known that for a.e. $t\in [0,T]$, $\lim_{k\rightarrow \infty}w_k(t)=w(t)$ holds; see, e.g., \cite[Proposition~5.2 of Chapter 3]{EK86}. Since $\sL f\in C_0(E)$ is bounded, it follows from the dominated convergence theorem that
\[
   \lim_{k\rightarrow \infty}\left\|\phi_2(w_k)-\phi_2(w)\right\|_{u,T}\leq \int_0^T \lim_{k\rightarrow \infty}\left| \sL f(w_k(s))-\sL f(w(s))\right| \dd s=0.
\]
This establishes the continuity of $\phi_2$. It remains to verify the continuity of $\phi_1$. Recall that $E_\Delta$ may be identified with $[0,\infty]$, equipped with the metric $r(x,y):=|\arctan x-\arctan y|$. Let $w_k\rightarrow w$ in $D_{E_\Delta}$ and fix $T>0$. According to \cite[Proposition~5.3 of Chapter 3]{EK86}, there exist $\lambda_k\in \Lambda$ such that \eqref{eq:55} holds and 
\begin{equation}\label{eq:56}
    \lim_{k\rightarrow \infty} \sup_{t\in [0,T]}r(w_k(\lambda_k(t)), w(t))=0. 
\end{equation}
We then have
\[
\|\phi_1(w_k\circ \lambda_k)-\phi_1(w)\|_{u,T}\leq |f(w_k(0))-f(w(0))|+ \sup_{t\in [0,T]}|f(w_k(\lambda_k(t)))-f(w(t))|.
\]
Since $f\in C_0(E)$ can be viewed as a continuous function on $[0,\infty]$ by setting $f(\infty)=0$, which is uniformly continuous on $[0,\infty]$ with respect to the metric $r$, and $w_k(0)\rightarrow w(0)$, it follows from \eqref{eq:56} that $\|\phi_1(w_k\circ \lambda_k)-\phi_1(w)\|_{u,T}\rightarrow 0$. Therefore, $d(\phi_1(w_k), \phi_1(w))\rightarrow 0$, proving the continuity of $\phi_1$. 

Having established the continuity of \eqref{eq:57}, we note that $M^{(n),1} = \Psi\circ \Phi(B^{(n)})$, while $N^f = \Psi\circ \Phi(\pi)$, where $\pi = (\pi_t)_{t\ge 0}$ denotes the coordinate process on $(D_{E_\Delta}, \mathscr{D}, \mathbb{Q})$. Since $\bQ^{(n)}\Rightarrow \bQ$ implies $B^{(n)}\Rightarrow \pi$, the continuous mapping theorem yields $M^{(n),1}\Rightarrow N^f$. 

Next, we show that for any $T>0$, 
\begin{equation}\label{eq:512}
\lim_{n\rightarrow \infty}\sup_{t\in [0,T]}\left|M^{(n),2}_t\right|=0,\quad \bP\text{-a.s.}
\end{equation}
A straightforward calculation yields
\begin{equation}\label{eq:59}
    \sup_{t\in [0,T]}\left|M^{(n),2}_t\right|\leq \int_0^T\left| \left(\sL f-\sL^{(n)}f\right)(B^{(n)}(s))\right|\dd s+\frac{1}{n^2}\|\sL f\|_u. 
\end{equation}
For $x\neq 0$, it follows from \eqref{eq:58} and Taylor's theorem that
\[
\begin{aligned}
    \sL f(x)-\sL^{(n)}f(x)&=\frac{1}{2}f''(x)-\frac{n^2}{2}\left(f\left(\frac{\lfloor nx\rfloor+1}{n}\right)+f\left(\frac{\lfloor nx\rfloor -1}{n}\right)-2f\left(\frac{\lfloor nx\rfloor}{n}\right)\right) \\
    &=\frac{1}{2}f''(x)-\frac{f''(\theta_1)+f''(\theta_2)}{4}
\end{aligned}
\]
for some $\theta_1\in (\frac{\lfloor nx\rfloor -1}{n}, \frac{\lfloor nx\rfloor}{n})$ and $\theta_2\in (\frac{\lfloor nx\rfloor}{n}, \frac{\lfloor nx\rfloor+1}{n})$.  Let $\delta_{f''}$ denote the modulus of continuity of $f''$, namely,
\[
    \delta_{f''}(t):=\sup_{x,y\in E: |x-y|<t}|f''(x)-f''(y)|,\quad t>0. 
\]
Since $f''\in C_0(E)$ is uniformly continuous, we have $\lim_{t\rightarrow 0}\delta_{f''}(t)=0$. Noting $|x-\theta_1|,|x-\theta_2|<1/n$, we obtain
\begin{equation}\label{eq:510}
    |\sL^{(n)}f(x)-\sL f(x)|\leq \frac{\delta_{f''}(1/n)}{2},\quad x\neq 0. 
\end{equation}
For $x=0$, we have
\begin{equation}\label{eq:515}
\begin{aligned}
\sL^{(n)}f(0)&=n^2(P^{(n)}-I)f_n(0)=n^2\left(\fp^{(n)}_\Delta (-f(0))+\sum_{j\in \bN}\fp^{(n)}_j (f(j/n)-f(0)) \right) \\
&=\frac{1}{p_3}\left(-p_1f(0)+p_2f'(\theta^{(n)}_3)+\int_{(0,\infty)}h_n(x)\,p_4(\dd x) \right)
\end{aligned}\end{equation}
for some $\theta^{(n)}_3\in (0,1/n)$, where $h_n(x):=\sum_{j= 2}^{n^2-1}(f(j/n)-f(0))1_{(j/n,(j+1)/n]}(x)$. Since $f\in \cD(\sL)$ satisfies the boundary condition stated in \eqref{eq:37}, it follows that
\[
\begin{aligned}
p_3(\sL f(0)-\sL^{(n)}f(0))&=p_2(f'(0)-f'(\theta^{(n)}_3))+\int_{(0,\infty)}(f(x)-f(0))\,p_4(\dd x)\\
&\qquad -\int_{(0,\infty)}h_n(x)\,p_4(\dd x).
\end{aligned}\]
Moreover,
\begin{equation}\label{eq:516}
    |h_n(x)|\leq \left(\|f'\|_u+2\|f\|_u\right)\cdot (1\wedge x)\in L^1((0,\infty),p_4)
\end{equation}
and $\lim_{n\rightarrow \infty}h_n(x)=f(x)-f(0)$ for all $x\in (0,\infty)$. By the continuity of $f'$ and the dominated convergence theorem, we conclude that
\begin{equation}\label{eq:511}
    \chi(n):=\sL f(0)-\sL^{(n)}f(0)\rightarrow 0
\end{equation}
as $n\rightarrow \infty$. Combining \eqref{eq:59}, \eqref{eq:510} and \eqref{eq:511}, we obtain
\[
    \sup_{t\in [0,T]}\left|M^{(n),2}_t\right|\leq \left(\frac{\delta_{f''}(1/n)}{2}+\chi(n)\right)\cdot T +\frac{1}{n^2}\|\sL f\|_u\rightarrow 0,
\]
as $n\rightarrow \infty$, which readily establishes \eqref{eq:512}.

Finally, we assert that
\begin{equation}\label{eq:514}
   \lim_{n\rightarrow \infty} d(M^{(n)},M^{(n),1})=0,\quad \bP\text{-a.s.},
\end{equation}
where $d$ is the metric on $D_\bR$ given by \eqref{eq:513}. Then, since $D_\bR$ is separable (see, e.g., \cite[Theorem~5.6 of Chapter 3]{EK86}), it follows from \cite[Corollary~3.3 of Chapter 3]{EK86} that \eqref{eq:514}, together with the weak convergence $M^{(n),1}\Rightarrow N^f$, eventually yields the desired conclusion $M^{(n)}\Rightarrow N^f$. 

Indeed, noting that $\lambda_0\in \Lambda$ and $\gamma(\lambda_0)=0$ for $\lambda_0(t):=t$, $t\geq 0$, we obtain that for any $T>0$, 
\[
\begin{aligned}
d(M^{(n)},M^{(n),1})&\leq \int_0^T e^{-u} \left(\sup_{t\in [0,u]} |M^{(n)}_t-M^{(n),1}_t|\wedge 1\right)\dd u+\int_T^\infty e^{-u}\dd u \\
&\leq \sup_{t\in [0,T]}|M^{(n),2}_t|+e^{-T}. 
\end{aligned}\]
It then follows from \eqref{eq:512} that
\[
    \limsup_{n\rightarrow \infty} d(M^{(n)},M^{(n),1})\leq e^{-T},\quad \bP\text{-a.s.}
\]
Letting $T\uparrow \infty$ yields \eqref{eq:514}. This completes the proof. 
\end{proof}

We are now in a position to complete the proof of  Theorem~\ref{THM31}.

\begin{proof}[Proof of Theorem~\ref{THM31}]
In light of Lemma~\ref{LM51}, it suffices to show that $N^f$ defined as \eqref{eq:52} is a martingale on $(D_{E_\Delta}, \sD, \bQ)$. To this end, we first observe that for $t\geq 0$, \eqref{eq:53} is equal to
\[
f_n\left(X^{(n)}_{\lfloor n^2t\rfloor}\right)-f_n\left(X^{(n)}_0\right)-\sum_{j=0}^{\lfloor n^2t\rfloor-1}(P^{(n)}-I)f_n\left(X^{(n)}_j\right). 
\]
According to Lemma~\ref{LM21}, this process is a martingale on $(\Omega, \sG,\bP)$ adapted to the natural filtration $(\sG^{(n)}_t)_{t\geq 0}$ of $B^{(n)}$. Consequently, $N^{(n)}$ defined as \eqref{eq:54} is a martingale on $(D_{E_\Delta}, \sD, \bQ^{(n)})$. For any $n$, $t> 0$ and $w\in D_{E_\Delta}$, we have
\[
    |N^{(n)}_t(w)-N^{(n)}_{t-}(w)|\leq 2\|f\|_u+\frac{1}{n^2}\left|\sL^{(n)}f(w(\lfloor n^2t\rfloor/n^2))\right|,
\]
where $N^{(n)}_{t-}:=\lim_{s\uparrow t}N^{(n)}_s$. 
For $x\neq 0$, it is immediate that
\[
    \frac{1}{n^2}\left|\sL^{(n)}f(x)\right| \leq 2\|f\|_u. 
\]
From \eqref{eq:515} and \eqref{eq:516}, we obtain that
\begin{equation}\label{eq:517}
    \frac{1}{n^2}\left|\sL^{(n)}f(0)\right|\leq \frac{c}{n^2},
\end{equation}
where $c:=\left(p_1\|f\|_u+p_2\|f'\|_{u}+\left(\|f'\|_{u}+2\|f\|_u\right)\cdot\int_{(0,\infty)}(1\wedge x)\,p_4(\dd x)\right)/p_3$ is a constant independent of $n, t$ or $w$. As a result,
\[
    |N^{(n)}_t(w)-N^{(n)}_{t-}(w)|\leq 4\|f\|_u+c,\quad \forall t> 0, n\geq 1, w\in D_{E_\Delta}. 
\]
Note that $N^{(n)}\Rightarrow N^f$ has been established in Proposition~\ref{PRO52}. Applying \cite[IX., Corollary 1.19]{JS03} to $N^{(n)}$ and $N^f$, we can conclude that $N^f$ is a local martingale on $(D_{E_\Delta},\sD,\bQ)$. Finally, for any $T>0$,
\[
|N^f_t|\leq 2\|f\|_u+\frac{1}{2}\|f''\|_u\cdot T,\quad \forall 0\leq t\leq T, 
\]
so that $N^f$ is in fact a martingale. (It is well known that any bounded local martingale is a true martingale.) This completes the proof. 
\end{proof}

\section{Proof of Theorem~\ref{THM35}}\label{SEC7}

The proof of Theorem~\ref{THM35} follows the same overall strategy as that of Theorem~\ref{THM31}. However, a fundamental difficulty arises in the present setting. For the BRW $X^{(n)}$ corresponding to \eqref{eq:316} and \eqref{eq:317}, the rough estimate that was repeatedly employed in Sections \ref{Se4} and \ref{SEC5},
\[
\mathbb{E}\!\left[\sum_{k=0}^{\lfloor n^{2}t\rfloor-1} \mathbf{1}_{\{X^{(n)}_k=0\}}\right]\le n^{2}t,\qquad \forall t>0,
\]
is no longer sufficient for our purposes. To overcome this difficulty, we appeal to Lemma~\ref{LM22}, which provides the finer estimates required in this case.

\begin{lemma}\label{LM81}
Let $X^{(n)}$ be a BRW with jumping measure given by \eqref{eq:316} and \eqref{eq:317},  with no restriction on the initial state.
For any positive integer $m$, the following estimate holds:
\begin{equation}\label{eq:71}
   \mathbb{E}\!\left[\sum_{k=0}^{m-1} \mathbf{1}_{\{X^{(n)}_k=0\}}\right]\le \frac{e}{\sqrt{1-e^{-\frac{2}{m}}}}. 
\end{equation}
\end{lemma}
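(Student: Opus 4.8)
The plan is to reduce the estimate \eqref{eq:71} to the special case in which $X^{(n)}$ starts from the boundary state $0$, which is already supplied by Lemma~\ref{LM22}. First observe that the jumping measure \eqref{eq:316} satisfies $\fp^{(n)}_0=0$, so the estimate \eqref{eq:25-2} of Lemma~\ref{LM22} applies verbatim to $X^{(n)}$ issued from $0$, giving
\[
\mathbb{E}_0\!\left[\sum_{k=0}^{m-1}\mathbf{1}_{\{X^{(n)}_k=0\}}\right]=\sum_{k=0}^{m-1}\bP_0\!\left(X^{(n)}_k=0\right)\le \frac{e}{\sqrt{1-e^{-\frac{2}{m}}}}.
\]
It therefore remains to show that for every initial state $i\ge 1$ the left-hand side of \eqref{eq:71} is dominated by the same quantity computed from $0$.

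To this end, introduce the first hitting time $\tau_0:=\inf\{k\ge 0:X^{(n)}_k=0\}$. On the event $\{\tau_0\ge m\}$ the indicator sum $\sum_{k=0}^{m-1}\mathbf{1}_{\{X^{(n)}_k=0\}}$ vanishes, so only the events $\{\tau_0=j\}$ with $0\le j\le m-1$ contribute. On $\{\tau_0=j\}$ the chain restarts from $0$ at time $j$, and by the strong Markov property of the discrete-time chain $X^{(n)}$ at the stopping time $\tau_0$,
\[
\mathbb{E}_i\!\left[\sum_{k=0}^{m-1}\mathbf{1}_{\{X^{(n)}_k=0\}}\,\Big|\,\tau_0=j\right]=\mathbb{E}_0\!\left[\sum_{k=0}^{m-1-j}\mathbf{1}_{\{X^{(n)}_k=0\}}\right]\le \mathbb{E}_0\!\left[\sum_{k=0}^{m-1}\mathbf{1}_{\{X^{(n)}_k=0\}}\right],
\]
the last inequality because the partial sums of nonnegative terms are nondecreasing in the upper index. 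Summing against $\bP_i(\tau_0=j)$ over $0\le j\le m-1$, whose total mass is at most $1$, yields $\mathbb{E}_i[\sum_{k=0}^{m-1}\mathbf{1}_{\{X^{(n)}_k=0\}}]\le \mathbb{E}_0[\sum_{k=0}^{m-1}\mathbf{1}_{\{X^{(n)}_k=0\}}]$; combined with the bound of the previous paragraph this gives \eqref{eq:71}. The case $i=0$ is immediate since then $\tau_0=0$.

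No genuine obstacle is expected here: the argument is a routine first-passage decomposition, and all the analytic content—the generating-function identity \eqref{eq:24-2} and the resulting estimate \eqref{eq:25-2}—is already carried by Lemma~\ref{LM22}, whose proof is deferred to Appendix~\ref{APPA}. The only points requiring a little care are the invocation of the strong Markov property at $\tau_0$ (valid for any time-homogeneous discrete-time Markov chain, with no integrability issue since the sums involved are bounded by $m$) and the harmless monotonicity step that replaces the summation limit $m-1-j$ by $m-1$.
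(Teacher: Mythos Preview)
Your proof is correct and follows essentially the same approach as the paper: both reduce to the case of initial state $0$ via Lemma~\ref{LM22} (using $\fp^{(n)}_0=0$), then handle a general initial state by a first-passage decomposition at $\tau_0$ together with the strong Markov property and a monotonicity step. The only cosmetic difference is that the paper phrases the strong Markov step via the shift operator $\theta_\tau$ rather than conditioning on $\{\tau_0=j\}$.
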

\begin{proof}
    We adopt the same notation $\bP_i$ for $i\in \bN$, with the corresponding expectation denoted by $\bE_i$,  as in Section~\ref{SEC42} to indicate that $X^{(n)}$ starts from $i$. It suffices to prove \eqref{eq:71} for each $\bE_i$ in place of $\bE$. The case $i=0$ follows directly from \eqref{eq:25-2}. The general case can be handled by an application of the strong Markov property of $X^{(n)}$. To this end, define $\tau:=\inf\{k\geq 0: X^{(n)}_k=0\}$.
    We then have
    \[
    \begin{aligned}
    \mathbb{E}_i\!\left[\sum_{k=0}^{m-1} \mathbf{1}_{\{X^{(n)}_k=0\}}\right]&=\mathbb{E}_i\!\left[\sum_{k=\tau}^{m-1} \mathbf{1}_{\{X^{(n)}_k=0\}}; \tau\leq m-1\right] \\
    &\leq \mathbb{E}_i\!\left[\left(\sum_{k=0}^{m-1} \mathbf{1}_{\{X^{(n)}_k=0\}}\right)\circ \theta_\tau; \tau\leq m-1\right]
    \end{aligned}\]
    Applying the strong Markov property of $X^{(n)}$ yields that 
    \[
    \begin{aligned}
       \mathbb{E}_i\!\left[\sum_{k=0}^{m-1} \mathbf{1}_{\{X^{(n)}_k=0\}}\right]&\leq \bE_i\left[\bE_{X^{(n)}_\tau}\left[\sum_{k=0}^{m-1} \mathbf{1}_{\{X^{(n)}_k=0\}} \right]; \tau\leq m-1 \right] \\
      & =\bE_i\left[\bE_{0}\left[\sum_{k=0}^{m-1} \mathbf{1}_{\{X^{(n)}_k=0\}} \right]; \tau\leq m-1 \right]\\
      &\le \frac{e}{\sqrt{1-e^{-\frac{2}{m}}}}. 
    \end{aligned}\]
    This completes the proof.
\end{proof}
\begin{remark}
It is worth noting that 
\begin{equation}\label{eq:73}
    \lim_{m\rightarrow \infty} m\cdot \left(1-e^{-\frac{2}{m}}\right)=2. 
\end{equation}
As a consequence, the quantity on the right hand side of \eqref{eq:71} grows asymptotically like $e\cdot \sqrt{m/2}$ as $m\rightarrow \infty$. 
\end{remark}

\subsection{Proof of tightness}

To obtain the tightness of $B^{(n)}$, it suffices to prove the condition \eqref{eq:42} for each $f\in \cD(\sL)$ as stated in Theorem~\ref{THM41}. We retain the notation and follow the same scheme as in Section~\ref{SEC42}.

\begin{proof}[Proof of \eqref{eq:42}]
Note that \eqref{eq:49} remains applicable, whereas in the present setting, \eqref{eq:410} should be replaced by the following estimate: 
\begin{equation}\label{eq:72}
|(P^{(n)}-I)f_n(0)|\leq \frac{p_1\|f\|_u+p_2\|f'\|_u+(2\|f\|_u+\|f'\|_u)\int_{(0,\infty)}(1\wedge x)\,p_4(\dd x)}{np_2}.
 \end{equation}
In contrast with \eqref{eq:417}, the present argument requires an estimate of
\[
\begin{aligned}
\bE&\left[\left|f\left(B^{(n)}_{\tau + \delta}\right) - f\left(B^{(n)}_{\tau}\right)\right|\right] \\
&\leq \sqrt{\bE\left[\left(M^{(n)}_{\sigma_2}-M^{(n)}_{\sigma_1}\right)^2\right]}+\bE\left[\left|\sum_{k=\sigma_1}^{\sigma_2-1}(P^{(n)}-I)f_n(X^{(n)}_k)\right|\right] \\
&=:\sqrt{J_1}+J'_2. 
\end{aligned}
\]
Using \eqref{eq:49} and \eqref{eq:72}, and noting that $|\sigma_2-\sigma_1|\leq n^2\delta +1$, we obtain
\[
    J'_2\leq \frac{\|f''\|_u(n^2\delta+1)}{2n^2}+\frac{c'_1}{np_2}\bE\left[\sum_{k=\sigma_1}^{\sigma_2-1}1_{\{X^{(n)}_k=0\}} \right],
\]
where $c'_1$ is the numerator on the right hand side of \eqref{eq:72}. 
It follows from the strong Markov property of $X^{(n)}$ and \eqref{eq:71} that 
\[
\begin{aligned}
\bE\left[\sum_{k=\sigma_1}^{\sigma_2-1}1_{\{X^{(n)}_k=0\}} \right]&\leq \bE \left[\left(\sum_{k=0}^{\lfloor n^2\delta\rfloor+1}1_{\{X^{(n)}_k=0\}}\right)\circ \theta_{\sigma_1}\right] \\
&=\bE \left[\bE_{X_{\sigma_1}}\left[\sum_{k=0}^{\lfloor n^2\delta\rfloor+1}1_{\{X^{(n)}_k=0\}} \right] \right] \\
&\le \frac{e}{\sqrt{1-e^{-\frac{2}{\lfloor n^2\delta\rfloor+2}}}}. 
\end{aligned}\]
This estimate, together with \eqref{eq:73}, yields
\[
    \limsup_{n\rightarrow \infty}\sup_{\tau\in \Upsilon^{(n)}_T}J'_2\leq \frac{\|f''\|_u}{2}\cdot \delta+e\sqrt{\delta/2}\cdot \frac{c'_1}{p_2}. 
\]
Therefore, 
\begin{equation}\label{eq:74}
\lim_{\delta\rightarrow 0}\limsup_{n\rightarrow \infty}\sup_{\tau\in \Upsilon^{(n)}_T}J'_2=0.
\end{equation}

To examine the term $J_1$, we observe that \eqref{eq:419-2} remains valid. In addition, the preceding discussion leading up to \eqref{eq:419} continues to apply, while \eqref{eq:419} should be replaced by
\begin{equation}\label{eq:75}
g(0)\leq \frac{2\|f'\|_u^2}{n^2}+\frac{2(\|f'\|_u+2\|f\|_u)^2\int_{(0,\infty)}(1\wedge x)\,p_4(\dd x)}{np_2}+2\left(\frac{c'_1}{n}\right)^2=:\beta(n).
\end{equation}
This estimate, together with \eqref{eq:418}, yields that
\begin{equation}\label{eq:76}
    \limsup_{n\rightarrow \infty}\sup_{\tau\in \Upsilon^{(n)}_T}J_{12}=0. 
\end{equation}
We next consider the remaining term $J_{11}$. Note that
\[
    J_{11}=\bE\left[h(X^{(n)}_{\sigma_1})\right],
\]
where $h$ is given by \eqref{eq:424-2}. For $i\in \bN$ and each $0\leq j\leq k_\delta$, the Markov proprety of $X^{(n)}$ implies that
\[
\bE_i\left[\left(M^{(n)}_{j+1}-M^{(n)}_j\right)^2 \right]=\bE_i\left[g(X^{(n)}_j) \right],
\]
where $g$ is given in \eqref{eq:421-2}. Therefore, using \eqref{eq:418}, \eqref{eq:75} and \eqref{eq:71}, we obtain
\[
\begin{aligned}
h(i)&\leq \left(\frac{2\|f'\|^2_u}{n^2}+\frac{\|f''\|_u^2}{2n^4}\right)\cdot n^2\delta+\beta(n)\cdot \bE_i\left[ \sum_{j=0}^{k_\delta}1_{\{X^{(n)}_j=0\}}\right] \\
&\leq \left(\frac{2\|f'\|^2_u}{n^2}+\frac{\|f''\|_u^2}{2n^4}\right)\cdot n^2\delta+\beta(n)\cdot \frac{e}{\sqrt{1-e^{-\frac{2}{k_\delta+1}}}},
\end{aligned}\]
where $\beta(n)$ is given in \eqref{eq:75}. Consequently, 
\[
    J_{11}\leq \left(\frac{2\|f'\|^2_u}{n^2}+\frac{\|f''\|_u^2}{2n^4}\right)\cdot n^2\delta+\beta(n)\cdot \frac{e}{\sqrt{1-e^{-\frac{2}{k_\delta+1}}}}. 
\]
By utilizing \eqref{eq:73} with $m=k_\delta+1$, we conclude that
\begin{equation}\label{eq:77}
     \lim_{\delta\rightarrow 0}\limsup_{n\rightarrow \infty}\sup_{\tau\in \Upsilon^{(n)}_T}J_{11}=0. 
\end{equation}

Combining \eqref{eq:74}, \eqref{eq:76} and \eqref{eq:77}, we eventually obtain
\[
\begin{aligned}
     \lim_{\delta \rightarrow 0}& \limsup_{n \to \infty} \sup_{\tau\in \Upsilon^{(n)}_T} \mathbb{P}(|B^{(n)}_{\tau + \delta} - B^{(n)}_{\tau}| > \varepsilon)\\
     &\leq \frac{1}{\varepsilon} \lim_{\delta \rightarrow 0} \limsup_{n \to \infty} \sup_{\tau\in \Upsilon^{(n)}_T}\bE\left[\left|f\left(B^{(n)}_{\tau + \delta}\right) - f\left(B^{(n)}_{\tau}\right)\right|\right]=0.
 \end{aligned}\]
 This establishes \eqref{eq:42}.  
\end{proof}

\subsection{Proof of weak convergence}

The proof of weak convergence follows the argument of Section~\ref{SEC5}. In completing the proof of the corresponding assertion in Proposition~\ref{PRO52}, it suffices to replace \eqref{eq:512} by the following fact:
 \begin{equation}\label{eq:78}
\lim_{n\rightarrow \infty}\bE\left[\sup_{t\in [0,T]}\left|M^{(n),2}_t\right|\right]=0.
\end{equation}
(Repeating the final step of the proof yields that the sequence of random variables in \eqref{eq:514} converges to $0$ in probability; this is what is required to apply \cite[Corollary 3.3 of Chapter 3]{B99}.) We now proceed to prove this statement.

\begin{proof}[Proof of \eqref{eq:78}]
    According to \eqref{eq:59}, it suffices to show that
    \[
        \lim_{n\rightarrow \infty}\bE\left[\int_0^T \left| \left(\sL f-\sL^{(n)}f\right)(B^{(n)}(s))\right|\dd s\right]=0.
    \]
    From \eqref{eq:510}, we know that
    \begin{equation}\label{eq:79-2}
     \bE\left[\int_0^T \left| \left(\sL f-\sL^{(n)}f\right)(B^{(n)}(s))1_{\{B^{(n)}_s\neq 0\}}\right|\dd s\right] \leq \frac{\delta_{f''}(1/n)}{2}\cdot T\rightarrow 0
    \end{equation}
    as $n\rightarrow \infty$. 
    In analogy with  \eqref{eq:515}, a direct computation yields 
    \[
        \sL^{(n)}f(0)=n\left(\frac{p_1}{p_2}(-f(0))+\fp^{(n)}_1\cdot f'(\theta^{(n)}_3)+\frac{\int_{(0,\infty)}h_n(x)\,p_4(\dd x)}{p_2}\right):=n\cdot \gamma(n),
    \]
    for some $\theta^{(n)}_3\in (0,1/n)$, where $h_n$ is given below \eqref{eq:515}. By \eqref{eq:319-2} and the boundary condition satisfied by $f$ in \eqref{eq:37},  the sequence $\gamma(n)$ satisfies
    \begin{equation}\label{eq:79}
        \lim_{n\rightarrow \infty}\gamma(n)=0. 
    \end{equation}
   Consequently, 
    \[
    \begin{aligned}
     &\bE\left[\int_0^T \left| \left(\sL f-\sL^{(n)}f\right)(B^{(n)}(s))1_{\{B^{(n)}_s= 0\}}\right|\dd s\right]  \\
     &\quad \leq |\sL f(0)-n\gamma(n)|\cdot \frac{1}{n^2} \bE\left[\sum_{k=0}^{\lfloor n^2T\rfloor+1}1_{\{X^{(n)}_k=0\}} \right].
    \end{aligned}\]
By utilizing \eqref{eq:71} (with $m:=\lfloor n^2T\rfloor+2$) and \eqref{eq:73}, we can further use \eqref{eq:79} to obtain
\[
   \lim_{n\rightarrow \infty} \bE\left[\int_0^T \left| \left(\sL f-\sL^{(n)}f\right)(B^{(n)}(s))1_{\{B^{(n)}_s= 0\}}\right|\dd s\right]=0. 
\]
This, together with \eqref{eq:79-2}, establishes \eqref{eq:78}. 
\end{proof}

After establishing the corresponding conclusion of Proposition~\ref{PRO52} for the present case $p_2\neq 0, p_3=0$, it suffices to replace \eqref{eq:517} by the estimate
\[
  \frac{1}{n^2}|\sL^{(n)}f(0)|\leq \frac{\gamma(n)}{n}. 
\] 
With this substitution, the proof of Theorem~\ref{THM35} can be completed.

\appendix

\section{Proof of  (\ref{eq:24-2})}\label{APPA}

In the following proof, we use $\bP_i$ (with corresponding expection $\bE_i$) to denote the law of the BRW with transition matrix \eqref{eq:22} starting from $i\in \bN$. The expectation $\bE$ appearing in the statement of Lemma~\ref{LM22} is understood as $\bE_0$. 

\begin{proof}[Proof of \eqref{eq:24-2}]
For convenience, set $a_k:=\bP(X_k=0)$ for $k\geq 0$. Clearly, $a_0=1$ and $a_1=\fp_0$. Let $\tau:=\inf\{k\geq 0: X_k=0\}$ denote the first return time of $X$ to $0$. 

Fix $k\geq 2$. By the Markov property of $X$, we have
\[
    a_k=\sum_{i=0}^\infty\bP(X_1=i)\bP_i(X_{k-1}=0)=\fp_0a_{k-1}+\sum_{i=1}^{k-1}\fp_i\cdot \bP_i(X_{k-1}=0).
\]
In addition, for each $i\geq 1$,
\begin{equation}\label{eq:A2}
\begin{aligned}
    \bP_i(X_{k-1}=0)&=\bP_i(X_{k-1}=0,i\leq \tau\leq k-1)\\
    &=\sum_{j=i}^{k-1}\bP_i(\bP_{X_j}(X_{k-j-1}=0);\tau=j)\\
    &=\sum_{j=i}^{k-1}a_{k-j-1}\bP_i(\tau=j).
\end{aligned}\end{equation}
The quantity $\mathbb{P}_i(\tau=j)$ is precisely the probability that a simple random walk starting from $i\ge 1$ hits 0 for the first time at step $j$. This probability is nonzero only when $j-i$ is even, say $j-i=2j_0$, and a direct computation yields
\begin{equation}\label{eq:A1}
    \bP_i(\tau=j)=\bP_i(\tau=i+2j_0)=\frac{M_{i,j_0}}{2^{i+2j_0}},
\end{equation}
where 
\[
M_{i,j_0}=\binom{i+2j_0-1}{j_0}-\binom{i+2j_0-1}{j_0-1}= \frac{i}{i+2j_0} \binom{i+2j_0}{j_0}
\]
are the so-called \emph{shifted Catalan numbers}. Their generating function is given by
\begin{equation}\label{eq:A3}
M_i(t) := \sum_{j_0=0}^\infty M_{i,j_0} t^{j_0} = \left( \frac{1 - \sqrt{1 - 4t}}{2t} \right)^i,\quad 0<t\leq 1/4;
\end{equation}
see, e.g.,  \cite[\S1.3, (1.3)]{S15} and  \cite[\S5.4, (5.60)]{GKP91}. Substituting \eqref{eq:A1} into \eqref{eq:A2} yields 
\[
    \bP_i(X_{k-1}=0)=\sum_{j_0=0}^{\lfloor\frac{k-i-1}{2} \rfloor}a_{k-i-2j_0-1}\frac{M_{i,j_0}}{2^{i+2j_0}}. 
\]
Then it follows that
\[
    a_k=\fp_0a_{k-1}+\sum_{i=1}^{k-1}\fp_i\sum_{j_0=0}^{\lfloor\frac{k-i-1}{2} \rfloor}a_{k-i-2j_0-1}\frac{M_{i,j_0}}{2^{i+2j_0}},\quad k\geq 2. 
\]

We now turn to the computation of $F(x)$. 
For $0\leq x<1$, we have
\[
\begin{aligned}
    F(x)&=1+\fp_0x+\sum_{k= 2}^\infty \left[\fp_0a_{k-1}+\sum_{i=1}^{k-1}\fp_i\sum_{j_0=0}^{\lfloor\frac{k-i-1}{2} \rfloor}a_{k-i-2j_0-1}\frac{M_{i,j_0}}{2^{i+2j_0}}\right]\,x^k \\
    &=1+\fp_0 xF(x)+\sum_{k= 2}^\infty\sum_{i=1}^{k-1}\fp_i\left(\frac{x}{2}\right)^{i}\left[\sum_{j_0=0}^{\lfloor\frac{k-i-1}{2} \rfloor}a_{k-i-2j_0-1}x^{k-i-2j_0}M_{i,j_0}\left(\frac{x^2}{4}\right)^{j_0}\right].
\end{aligned}\]
By first interchanging the order of summation in $k$ and $i$, and then making the change of variables $k':=k-i$, we obtain
\[
    F(x)-1-\fp_0 xF(x)=\sum_{i=1}^\infty \fp_i\left(\frac{x}{2}\right)^{i}\left[\sum_{k'=1}^\infty\sum_{j_0=0}^{\lfloor\frac{k'-1}{2} \rfloor}a_{k'-2j_0-1}x^{k'-2j_0}M_{i,j_0}\left(\frac{x^2}{4}\right)^{j_0}\right].
\]
For each fixed $i\geq 1$, we may further interchange the order of summation in $k'$ and $j_0$, and then set $k'':=k'-2j_0$, which yields
\[
\begin{aligned}
    \sum_{k'=1}^\infty&\sum_{j_0=0}^{\lfloor\frac{k'-1}{2} \rfloor}a_{k'-2j_0-1}x^{k'-2j_0}M_{i,j_0}\left(\frac{x^2}{4}\right)^{j_0} \\
    &=\sum_{j_0=0}^\infty M_{i,j_0}\left(\frac{x^2}{4}\right)^{j_0}\sum_{k''=1}^\infty a_{k''-1}x^{k''}=M_i(x^2/4)\cdot xF(x),
\end{aligned}\]
where $M_i$ is the generating function given by \eqref{eq:A3}. As a result,
\[
F(x)-1-\fp_0 xF(x)=xF(x)\sum_{i=1}^\infty \fp_i\left(\frac{x}{2}\right)^{i}M_i(x^2/4)=xF(x)\sum_{i=1}^\infty \fp_i \left( \frac{1-\sqrt{1-x^2}}{x}\right)^i.
\]
This identity immediately leads to \eqref{eq:24-2}, completing the proof. 
\end{proof}

\bibliographystyle{siam} 
\bibliography{scalinglimit} 

\end{document}